\newtheorem{theorem}{Theorem}
\newtheorem{lemma}[theorem]{Lemma}
\newtheorem{question}[theorem]{Question}
\newtheorem{obs}{Observation}
\newcommand{\E}{\ensuremath{\mathbb E}}
\newcommand{\R}{\ensuremath{\mathbb R}}
\newcommand{\C}{\ensuremath{\mathbb C}}
\newcommand{\F}{\ensuremath{\mathcal F}}
\newcommand{\Vn}{\ensuremath{V_\nu}}
\newcommand{\Vl}{\ensuremath{V_\lambda}}
\newcommand{\Vm}{\ensuremath{V_\mu}}
\newcommand{\clmn}{\ensuremath{c_{\lambda \mu}^\nu}}
\newcommand{\plmn}{\ensuremath{P_{\lambda \mu}^\nu}}
\newcommand{\qlmn}{\ensuremath{Q_{\lambda \mu}^\nu}}
\newcommand{\zlmn}{\ensuremath{\zeta_{\lambda \mu}^\nu}}
\newcommand{\blmn}{\ensuremath{b_{\lambda \mu}^\nu}}
\newcommand{\olmn}{\ensuremath{O_{\lambda \mu}^\nu}}
\newcommand{\Kmd}{\ensuremath{K_{-\de}}}
\newcommand{\lab}{\label}  \newcommand{\ra}{\ensuremath{\rightarrow}}  \def\a{{\mathbf{\alpha}}} \def\de{{\mathbf{\delta}}} \def\De{{{\Delta}}}  
  \def\beq{\begin{eqnarray}} \def\eeq{\end{eqnarray}} \def\ben{\begin{enumerate}}
\def\een{\end{enumerate}} \def\bit{\begin{itemize}}
\def\bel{\begin{lemma}}
\def\eel{\end{lemma}}
\def\eit{\end{itemize}} \def\beqs{\begin{eqnarray*}} \def\eeqs{\end{eqnarray*}} \def\bel{\begin{lemma}} \def\eel{\end{lemma}}
\newcommand{\N}{\mathbb{N}} \newcommand{\Z}{\mathbb{Z}} \newcommand{\Q}{\mathbb{Q}}  
    \newcommand{\p}{\mathbb{P}}
  \newcommand{\la}{\lambda}  \renewcommand{\a}{\alpha}
\renewcommand{\b}{\beta}
\renewcommand{\t}{\theta}
   \def\eps{{\epsilon}}  \def\ie{i.\,e.\,} 
\def\vol{\mathrm{vol}}
\newcommand{\RR}{\mathbb{R}}
\newcommand{\hf}{{\mathcal{H}}_{f}}
\renewcommand{\u}{u}
\renewcommand{\v}{v}
\newtheorem{thm}{Theorem}[section]
\newtheorem{prop}[thm]{Proposition}
\theoremstyle{definition}
\newtheorem{defn}[thm]{Definition}
\theoremstyle{remark}
\numberwithin{equation}{section}
\begin{document}

\title{Estimating Certain Non-Zero Littlewood-Richardson Coefficients}%
\author{Hariharan Narayanan}
\maketitle
%\address{}%
%\email{}%
%
%\thanks{}%
%\subjclass{}%
%\keywords{}%
%
%%\date{}%
%%\Dedicatory{}%
%%\commby{}%
% ----------------------------------------------------------------
\begin{abstract}
Littlewood Richardson coefficients are structure constants appearing in the representation theory of the general linear groups ($GL_n$).
The main results of this paper are:
\ben
\item A strongly polynomial randomized approximation scheme  for certain Littlewood-Richardson coefficients.
\item A proof of approximate log-concavity of certain Littlewood-Richardson coefficients.
\een
\end{abstract}

%% ----------------------------------------------------------------
\section{Introduction}
%A surface that is convex viewed from above is a concave function of $\RR^2$.
Littlewood Richardson coefficients are structure constants appearing in the representation theory of the general linear groups ($GL_n$). They are ubiquitous in mathematics, appearing in representation theory, algebraic combinatorics, and the study of tilings. They appear in physics in the context of the fine structure of atomic spectra since Wigner \cite{Wigner}. They count the number of tilings using squares and triangles of certain domains \cite{squaretri}. They play a role in Geometric Complexity Theory, which seeks to separate complexity classes such as $P$ and $NP$ by associating group-theoretic varieties  to them, and then proving the non-existence of injective morphisms from one to the other by displaying representation theoretic obstructions \cite{gct1, gct2}.  Thus, computing or estimating Littlewood-Richardson coefficents is important in several areas of science. Results for testing the positivity of a Littlewood-Richardson coefficient may be found in \cite{gct3, BI}. For the case where the Lie group has a fixed rank, efficient (\ie polynomial time) computation is possible based on Barvinok's algorithm (see ``LATTE" \cite{Latte}), or by using vector partition functions (see \cite{billey}). The degree of the polynomial in the runtime depends on the rank. Unfortunately Littlewood-Richardson coefficients are  $\#P-$complete (see \cite{Nar1}), and so in the case of variable rank, under the widely held complexity theoretic belief that $P \neq NP$, they cannot be computed exactly in polynomial time.

We are thus lead to the question of efficient approximation:
\begin{question}
 Is there an algorithm which takes as input the labels $\la, \mu, \nu$ of a Littlewood-Richardson coefficient $c_{\la\mu}^\nu$, and produces in polynomial time an $1\pm \epsilon$ approximation with probability more than $ 1- \de$?
\end{question}
\begin{defn} We say that an algorithm for estimating a quantity $f(x)$, where $x \in \Q^n$ runs in randomized strongly polynomial time, if the number of ``standard" operations that it uses depends polynomially on $n$, but is independent of the bit-length of those rational numbers.
We require that  there be a universal constant $C$ such that the algorithm output a random rational number $\hat{f}(x)$ with the property that $$\p\left[\frac{\hat{f}(x)}{f(x)} \in (1 - C\eps, 1 + C\eps)\right] > 1 - C\de.$$
We allow a polynomial dependence in $n$, $\eps^{-1}$ and the negative logarithm $- \log \de$ but not the bitlength of $x$. Our set of standard operations consists of additions, subtractions, multiplications, divisions, comparisons and taking square-roots. We allow the use of random numbers whose bitlength depends on the bitlength of the input, provided the operations done on them are standard.
\end{defn}

\subsection{Littlewood-Richardson Cone}

Given a symmetric non-negative definite matrix $X$, let $eig(X)$ denote the eigenvalues of $X$ listed in non-increasing order. The Littlewood-Richardson cone (or LRC) is defined as the cone of $3-$tuples $(eig(U), eig(V), eig(W))$, where $U, V, W$ are symmetric positive definite and $U + V = W$.

Knutson and Tao proved the following in \cite{KTW2}.

\begin{theorem}
The Littlewood-Richardson coefficient $\clmn$ is greater than $0$ if and only if $(\la, \mu, \nu)$ is an integer point in the Littlewood-Richardson cone.
\end{theorem}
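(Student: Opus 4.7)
The plan is to prove the two implications separately, with the forward direction (positivity implies integer point in the LRC) being essentially definitional and the converse direction requiring the deeper saturation mechanism.

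For the forward direction, I would recall that $\clmn$ counts integer hives (or equivalently Littlewood--Richardson skew tableaux) with boundary determined by $(\la,\m,\nu)$. Hives can be interpreted as integer points in a polytope whose defining inequalities cut out, as the boundary triple varies, precisely the Littlewood--Richardson cone. Hence the existence of even one such hive exhibits $(\la,\m,\nu)$ as lying in the LRC, and since $\la,\m,\nu$ are integral, as an integer point.

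For the converse, I would split it into two classical pieces. First, invoke Klyachko's theorem: if $(\la,\m,\nu)$ is any point (integer or not) in the LRC, then some dilate satisfies $c_{N\la,N\m}^{N\nu}>0$ for some positive integer $N$. Second, apply the saturation property proved by Knutson--Tao (the main content of \cite{KTW2}): $c_{N\la,N\m}^{N\nu}>0$ for some $N\ge 1$ implies $\clmn>0$. Chaining these gives the desired implication.

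The main obstacle is of course the saturation step, and my proposal for handling it mirrors the honeycomb/hive approach. Encode $\clmn$ as the number of integer hives on a triangle of side $|\nu|$ with boundary labels given by $(\la,\m,\nu)$. Given an integer hive of ``size'' $N$ realizing $c_{N\la,N\m}^{N\nu}>0$, scaling down by $N$ produces a rational hive with boundary $(\la,\m,\nu)$, so the hive polytope for $(\la,\m,\nu)$ is nonempty. The hard and non-obvious step is upgrading this rational hive to an integer hive with the same boundary. I would follow Knutson--Tao's route: pass to a vertex of the hive polytope and argue, using the ``smallness'' / rigidity arguments on honeycomb vertices (graphs of forces, generic rigidity), that such a vertex must already be integral when the boundary is integral. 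This rigidity analysis is the core difficulty; the combinatorial geometry of honeycombs, and in particular understanding how perturbations of generic honeycombs interact with boundary constraints, is where essentially all of the technical work lives. The remaining assembly into the ``if and only if'' statement is then immediate.
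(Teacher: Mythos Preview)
The paper does not give its own proof of this statement: it is quoted as a known theorem of Knutson and Tao and attributed to \cite{KTW2} (with \cite{KT1} supplying the saturation input), so there is nothing against which to compare your argument line by line.  That said, your outline is essentially the standard proof and is correct in structure: the forward implication is the easier direction (positivity of $\clmn$ gives a hive, hence a point in the hive cone, whose boundary projection is the LRC; alternatively one invokes the Heckman/Klyachko direction of the eigenvalue problem), and the converse is exactly Klyachko's theorem combined with Knutson--Tao saturation.  Your identification of the integrality of hive-polytope vertices (via the honeycomb rigidity/``smallness'' arguments) as the crux of saturation is accurate, and this is precisely the content of \cite{KT1}.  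One minor bibliographic point: saturation is proved in \cite{KT1}, not \cite{KTW2}; the latter determines the facets of the LRC and is what the present paper cites for the full equivalence, but the decisive ingredient you describe lives in the earlier paper.
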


\begin{figure}\label{fig:LRcone1}
\begin{center}
\includegraphics[height=1.6in]{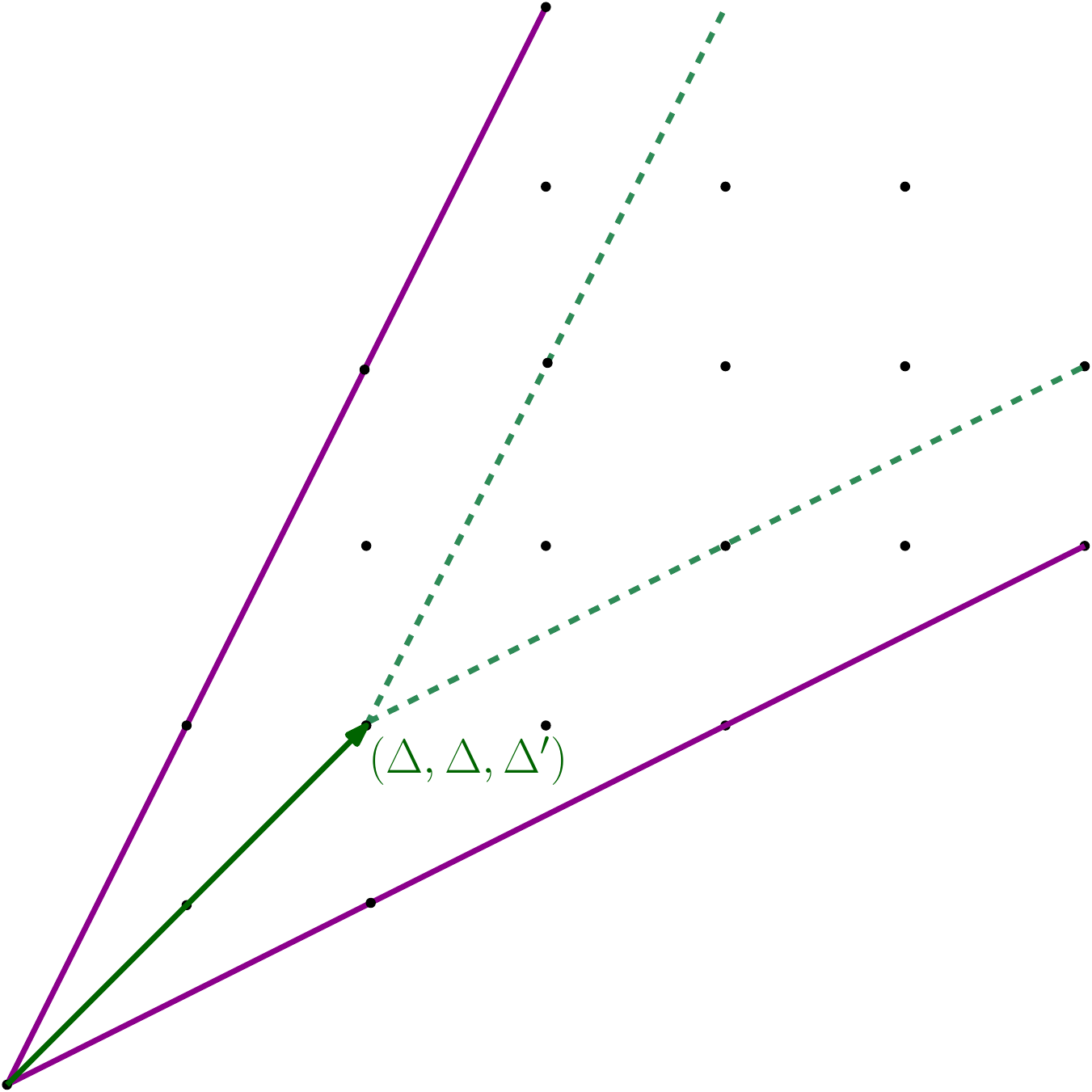}
\caption{$LRC$ and $LRC + (\De, \De, \De')$}
\end{center}
\end{figure}

In the remainder of this paper,  $C, C_1, \dots$ will denote  sufficiently large absolute constants.

In particular, as $\gamma$ tends to infinity, the fraction of all $c_{\la\mu}^\nu$ corresponding to integer points in
$$LRC \cap \left\{\|(\la, \mu, \nu)\|_1  \leq  {\gamma}\right\}$$ that can be approximated, tends to $1$.

Let $\De = 2 (n^3, n^3 - n^2, \dots, n^2)$,

$\De' = ({3n^3}+ {n^2}, {3n^3} - {n^2},  \dots, {n^3} + {3n^2})$.

For $n^2 \eps^{-1} \in \N$, let $\De_\eps = \left(\frac{2}{\eps}\right) (n^3, n^3 - n^2, \dots, n^2)$,
and  $\De_\eps' = \left(\frac{1}{\eps}\right)({3n^3}+ {n^2}, {3n^3} - {n^2},  \dots, {n^3} + {3n^2})$.
 \begin{theorem}\lab{thm:10}
If $(\lambda, \mu, \nu) \in (\De, \De, \De') +  LRC$, then $c_{\la \mu}^\nu$ can be approximated in randomized strongly polynomial time.
%using $O(\frac{n^5 polylognlog(1/\de)}{\eps^2})$ random bits and $O(\frac{n^{6 + 2\gamma} polylognlog(1/\de)}{\eps^2})$ arithmetic operations on numbers with bitlength $ log|\prod \nu_i| + polylogn$.
\end{theorem}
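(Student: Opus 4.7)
The plan is to interpret $\clmn$ as a count of integer points in a polytope and then approximate this count via volume estimation. By the Knutson--Tao hive model, $\clmn$ equals the number of integer hives with boundary $(\la, \mu, \nu)$, i.e.~$|P_{\la\mu}^\nu \cap \Z^d|$ for an explicit polytope $P_{\la\mu}^\nu \subset \R^d$ with $d = O(n^2)$ cut out by $O(n^2)$ rhombus inequalities whose coefficients lie in $\{-1, 0, 1\}$. The task thus reduces to approximating a lattice-point count.

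The first key step is to show that when $(\la, \mu, \nu) \in (\De, \De, \De') + LRC$, the polytope $P_{\la\mu}^\nu$ contains a Euclidean ball of radius $\Omega(n)$. Writing $(\la, \mu, \nu) = (\De, \De, \De') + (\la_0, \mu_0, \nu_0)$ with $(\la_0, \mu_0, \nu_0) \in LRC$, I would construct an explicit hive $h^\ast$ with boundary $(\De, \De, \De')$ whose rhombus inequalities are all strict with slack $\Omega(n^2)$; the spacing of $\De$ and $\De'$ in steps of $2n^2$ is calibrated precisely to produce such uniform slack. Adding $h^\ast$ to any integer hive for $(\la_0, \mu_0, \nu_0)$ (which exists by the Knutson--Tao theorem cited above) then gives an interior point of $P_{\la\mu}^\nu$ at $\ell_\infty$-distance $\Omega(n^2)$ from every facet, hence at Euclidean distance $\Omega(n)$ since $d = O(n^2)$.

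The second step reduces lattice-point counting to volume approximation: because $P_{\la\mu}^\nu$ contains a ball of radius $r \gg \sqrt{d}/\eps$, a standard convolution-with-small-cube argument yields $|P_{\la\mu}^\nu \cap \Z^d| = (1 \pm O(\eps))\,\vol(P_{\la\mu}^\nu)$. The third step is to estimate $\vol(P_{\la\mu}^\nu)$ in randomized strongly polynomial time. After translating the deep ball to the origin and rescaling it to unit radius, $P_{\la\mu}^\nu$ becomes uniformly well-rounded in terms of $n$ alone, and the membership oracle uses only $O(n^2)$ standard operations on numbers of uniformly bounded magnitude. A hit-and-run simulated-annealing volume algorithm then runs in time polynomial in $n$, $\eps^{-1}$ and $\log\de^{-1}$ alone.

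The hard part will be ensuring strong polynomiality at this last step: classical volume algorithms are polynomial, but not strongly polynomial, in the bit-length of the input description. The escape route is twofold. First, the deep-interior-ball condition from Step 2 controls the geometric parameters of $P_{\la\mu}^\nu$ uniformly in $n$, so that rescaling removes all dependence on the magnitudes of $\la, \mu, \nu$. Second, the approximate log-concavity of $\clmn$ along dilation rays in the LR cone---the second main result advertised in the abstract---bounds the multiplicative ratios that appear along the annealing schedule by quantities depending only on $n$. Together these ensure that the number of Markov chain samples required, and the number of standard operations per sample, are both independent of the bit-length of $(\la, \mu, \nu)$.
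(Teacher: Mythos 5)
Your general outline matches the paper's: reduce $\clmn$ to a lattice-point count in the hive polytope $\plmn$, show the deep inscribed ball, and approximate via volume computation. But there are two real gaps, and the ``escape route'' you propose for strong polynomiality is not how the paper does it and does not actually work.

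First, the step ``$|\plmn\cap\Z^d| = (1\pm O(\eps))\vol(\plmn)$'' fails for arbitrary $\eps$. With the hypothesis $(\la,\mu,\nu)\in(\De,\De,\De')+LRC$ alone, the inscribed ball has radius $\Theta(n^2)$, which is a fixed quantity independent of $\eps$; your convolution argument needs radius $\gg\sqrt d/\eps$, which you do not have when $\eps$ is small. This gives only a constant-factor relationship between the lattice-point count and the volume, not $(1\pm\eps)$. The paper resolves this by splitting the estimate into two parts: it estimates $\vol\,\qlmn$ to $(1\pm\eps)$ by a volume algorithm, and \emph{separately} estimates the ratio $\vol\,\zlmn/\vol\,\qlmn$ to additive $\pm\eps$ by drawing near-uniform samples from $\qlmn$ with the Dikin walk, rounding each sample to the nearest lattice point, and checking membership in $\plmn$; Hoeffding's inequality plus the constant lower bound on the ratio (from Lemma~\ref{lem:z}) turn this into a $(1\pm O(\eps))$ multiplicative estimate of $\clmn$. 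Without this second Monte Carlo rounding step your scheme cannot reach arbitrary accuracy.

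Second, your claim that ``after translating the deep ball to the origin and rescaling it to unit radius, $\plmn$ becomes uniformly well-rounded in terms of $n$ alone'' is false: the inscribed radius is $\Theta(n^2)$, but the circumradius of $\plmn$ grows like $\|(\la,\mu,\nu)\|_1$, so the aspect ratio still depends on the bit-length of the input, and any volume algorithm whose run time is logarithmic in that ratio is only weakly polynomial. The paper's fix is the real technical content of the algorithm: it uses Tard\"os's strongly polynomial LP algorithm to find, for each facet $f$, a vertex $x_f$ of $\qlmn$ not on $f$, averages these to obtain a center $x_0$, and shows that the Dikin ellipsoid at $x_0$ satisfies $\frac{1}{\sqrt m}(\qlmn\cap(-\qlmn))\subseteq D_{x_0}(1)\subseteq \qlmn\cap(-\qlmn)$ and $\frac{1}{m-1}\qlmn\subseteq\qlmn\cap(-\qlmn)$, so the Cholesky factor of the Dikin matrix sends $\qlmn$ to a body with aspect ratio $O(m^{3/2})$ depending only on the number of constraints. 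Your alternative mechanism---invoking approximate log-concavity of $\clmn$ to control annealing ratios---is not in the paper, is a separate theorem proved by Brunn--Minkowski rather than used in the algorithm, and does not substitute for the Tard\"os-plus-Dikin rounding step. Finally, a small but telling slip: with rhombus constraints (each row having at most four entries in $\{\pm1\}$), an $\ell_\infty$-margin of $\Omega(n^2)$ gives a Euclidean ball of radius $\Omega(n^2)$, not $\Omega(n)$; you applied the generic $\sqrt d$ conversion and so missed Observation~\ref{obs:1}, which is precisely what lets the paper avoid the $\sqrt{\log r}$ loss of Kannan--Vempala.
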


The following theorem is shown by proving (in Lemma~\ref{lem:11}) that the fraction of all $c_{\la\mu}^\nu$ corresponding to integer points in
$$\left((\De, \De, \De') + LRC\right) \cap \left\{\|(\la, \mu, \nu)\|_1  \leq  {\gamma}\right\}$$ in $LRC \cap \left\{\|(\la, \mu, \nu)\|_1  \leq  {\gamma}\right\}$ is at least $1 - C(\frac{n^5}{\gamma})$.

\begin{theorem}\lab{thm:4}
There is an absolute constant $C$ such that for $n > C$, and $\gamma > C n^5$, there is a randomized strongly polynomial time algorithm for approximating  a $1 - C(\frac{n^5}{\gamma})$ fraction of all $c_{\la\mu}^\nu$ corresponding to integer points in
$$LRC \cap \left\{\|(\la, \mu, \nu)\|_1  \leq  {\gamma}\right\}.$$
\end{theorem}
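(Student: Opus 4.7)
The plan is to derive Theorem~\ref{thm:4} by combining Theorem~\ref{thm:10} with the lattice-point density estimate that constitutes Lemma~\ref{lem:11}. The algorithm itself is immediate: on input $(\la,\mu,\nu)\in LRC$ with $\|(\la,\mu,\nu)\|_1\le \gamma$, first test whether $(\la,\mu,\nu)-(\De,\De,\De')\in LRC$ (a polynomial-size linear feasibility question, since $LRC$ admits a polynomial-size description via Knutson--Tao honeycombs), and if so, invoke the algorithm of Theorem~\ref{thm:10}; otherwise abstain. The procedure is correct on every input in $(\De,\De,\De')+LRC$, runs in randomized strongly polynomial time, and by Lemma~\ref{lem:11} succeeds on a $1-Cn^5/\gamma$ fraction of all integer points in $LRC\cap\{\|(\la,\mu,\nu)\|_1\le\gamma\}$.

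Everything therefore reduces to Lemma~\ref{lem:11}, a purely geometric estimate, which I would prove as follows. Let $K_\gamma := LRC \cap \{\|(\la,\mu,\nu)\|_1\le\gamma\}$. The cone $LRC$ lies in the hyperplane $H=\{|\la|+|\mu|=|\nu|\}\subset \R^{3n}$ and has dimension $d=3n-1$, so $K_\gamma$ is a bounded $d$-dimensional rational polytope scaling linearly with $\gamma$. The ``bad'' set is $K_\gamma\setminus((\De,\De,\De')+LRC)$, consisting of points whose translate by $-(\De,\De,\De')$ leaves $LRC$; this is a shell of width at most $\|(\De,\De,\De')\|_1 = O(n^4)$ near the boundary of $K_\gamma$. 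Comparing $K_\gamma$ with its scaled copy $(1-Cn^4/\gamma)K_\gamma$ yields the convex-geometric bound
\[
\vol_d\bigl(K_\gamma\setminus((\De,\De,\De')+LRC)\bigr)\;\le\;\bigl(1-(1-Cn^4/\gamma)^d\bigr)\vol_d(K_\gamma)\;=\;O\!\bigl(n^5/\gamma\bigr)\vol_d(K_\gamma),
\]
using $d=\Theta(n)$. The final step is the transfer to a count in the lattice $\Z^{3n}\cap H$: because $K_\gamma$ is a rational polytope with a bounded number of facets whose defining coefficients grow only polynomially in $n$, an Ehrhart-type comparison shows that its integer-point count equals $\vol_d(K_\gamma)+O(\gamma^{d-1})$, and the error term is negligible compared to the leading $\Theta(\gamma^d)$ as soon as $\gamma\gg n^5$.

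The main obstacle is this last transfer, rather than the volume estimate itself. One must rule out the possibility that the shell, though of small measure, hosts a disproportionate concentration of lattice points. I would handle this by applying the same Ehrhart-type comparison separately to the shell---which is itself a difference of two rational polytopes of comparable facet complexity---so that the $(d-1)$-dimensional boundary contributions of both $K_\gamma$ and the shell are bounded by $O(\gamma^{d-1})$ and fit within the overall $n^5/\gamma$ budget. The remaining calculation, namely optimizing the absolute constant $C$ and checking that the threshold $\gamma > Cn^5$ suffices for the error terms to be absorbed, is then routine.
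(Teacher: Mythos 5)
Your reduction of Theorem~\ref{thm:4} to Theorem~\ref{thm:10} and Lemma~\ref{lem:11} is exactly the paper's proof: on input $(\la,\mu,\nu)$, test membership in $(\De,\De,\De')+LRC$ (a strongly polynomial linear feasibility check via the hive description) and call Theorem~\ref{thm:10} when it succeeds; the fraction on which it succeeds is governed by Lemma~\ref{lem:11}. For Lemma~\ref{lem:11} itself, your volume estimate for the shell via homothety is also the right first step and matches the paper's comparison of the homothetic truncated cones $D$ and $D'$, though you do not specify the center of scaling (one needs a center $c$ deep inside $LRC$ with $\eps c - (\De,\De,\De')\in LRC$, and the paper picks a concrete one, namely $(\la^o,\mu^o,\nu^o)=\bigl(\frac{n^2}{8\bar\gamma}\bigr)(\De,\De,\De')$; one also uses that $LRC$ is closed under addition).

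The genuine gap is in the volume-to-lattice-count transfer. Writing the error as ``$O(\gamma^{d-1})$ by an Ehrhart-type comparison'' hides exactly the quantity that needs to be controlled: the implied constant is of order the $(d-1)$-dimensional surface measure of $K_1 := LRC\cap\{\|\cdot\|_1\le 1\}$ times the diameter $\sqrt{3n}$ of a lattice cell, while the leading term has constant $\vol_d(K_1)$. To have $\sqrt{3n}\,\mathrm{Surf}(K_1)/\vol(K_1) = O(n^5)$ you need a lower bound on the inradius of $K_1$ (equivalently of $D'$ at scale $\gamma$), and your sketch does not supply one; a priori the cone could be very flat, making $\mathrm{Surf}/\vol$ enormous. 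Moreover, the shell $K_\gamma\setminus((\De,\De,\De')+LRC)$ is not convex, so Ehrhart theory does not apply to it directly, and ``comparable facet complexity'' is not the relevant measure --- it is inradius that controls the boundary error. The paper avoids Ehrhart theory entirely. It first constructs (via Lemma~\ref{lem:cor:1}, i.e.\ the Kirszbraun-based Lipschitz extension) an explicit Euclidean ball of radius $R=\Theta(\gamma/n)$ inside $D'$, then sandwiches the lattice counts elementarily: the union of unit cubes centered at lattice points of $D'$ contains the inner parallel body $D'_{-\sqrt{3n}}$, and the analogous union for $D$ is contained in the outer parallel body $D_{\sqrt{3n}}$. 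Lemma~\ref{lemma:1} (the cone/inscribed-sphere comparison $K_{-\de}\supseteq(1-\de/r_{in})K$) then converts the inradius bound into $\bigl(1+\tfrac{\bar\gamma}{n}\bigr)D'_{-\sqrt{3n}}\supseteq D_{\sqrt{3n}}$ and hence into the $1-C\bar\gamma$ fraction bound. To close your argument, you would need to establish the inradius estimate and replace the appeal to Ehrhart with this kind of parallel-body argument (or prove an explicit, $n$-uniform Ehrhart error bound, which amounts to the same inradius input).
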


The following result shows that while Okounkov's question on the log-concavity of Littlewood-Richcardson coeffecients in \cite{Okounkov} has been answered in the negative, a form of approximate log-concavity can be shown to hold among the coefficients corresponding to integer points in 
$(1/\eps)(\De, \De, \De') +  LRC$.

\begin{theorem}\lab{thm:ok}
If $n > C$ and $\theta (\la, \mu, \nu) + (1 - \theta) (\la', \mu', \nu') = (\bar \la, \bar \mu, \bar \nu)$, and each vector indexes a Littlewood-Richardson coefficient and is in $(1/\eps)(\De, \De, \De') +  LRC$, then
$$\log\left(\clmn\right) + C\eps \geq \theta \log\left(c_{\la'\mu'}^{\nu'} \right) +  (1 - \theta) \log\left( c_{\bar{\la}\bar{\mu}}^{\bar{\nu}} \right).
$$
\end{theorem}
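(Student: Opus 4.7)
The plan is to realize each $\clmn$ as the lattice-point count of a polytope $P_v$ (hive polytope of Knutson--Tao, or a Berenstein--Zelevinsky polytope) with $v=(\lambda,\mu,\nu)$, whose defining inequalities depend linearly on $v$, and then combine Brunn--Minkowski on volumes with a quantitative lattice-to-volume passage. The offset $(1/\eps)(\De,\De,\De')$ is there precisely so that each of the three hypothesized polytopes $P_v$, $P_{v'}$, $P_{\bar v}$ contains a box of side $\Omega(1/\eps)$ inside its affine span (of dimension $d = O(n^2)$), making the discrete-to-continuous passage tight to multiplicative error $e^{O(\eps)}$.

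I would first show (Step 1, via a Blichfeldt/Ehrhart-type argument on the large interior box) that under the deep-interior hypothesis, $\log c_v = \log \mathrm{vol}\, P_v + \kappa(v) + O(\eps)$ for an explicit affine normalization $\kappa$. Next (Step 2), the linearity of the defining inequalities in $v$ yields the Minkowski inclusion $\theta P_v + (1-\theta) P_{v'} \subseteq P_{\bar v}$; Brunn--Minkowski combined with weighted AM--GM then gives $\log \mathrm{vol}\, P_{\bar v} \geq \theta \log \mathrm{vol}\, P_v + (1-\theta) \log \mathrm{vol}\, P_{v'}$, and composition with Step~1 produces the standard approximate log-concavity $\log c_{\bar v} \geq \theta \log c_v + (1-\theta) \log c_{v'} - O(\eps)$.

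The stated inequality places $\log c_v$ on the LHS and $\log c_{\bar v}$ inside the RHS, so it is not a direct restatement of the above; to reach it (Step 3) I would establish an $O(\eps)$-matching upper bound $\log \mathrm{vol}\, P_{\bar v} \leq \theta \log \mathrm{vol}\, P_v + (1-\theta) \log \mathrm{vol}\, P_{v'} + O(\eps)$, upgrading approximate log-concavity to approximate log-\emph{affinity} along the segment $[v,v']$. The crucial observation is that $\mathrm{vol}\, P_v$ is a polynomial of degree $d$ in $v$ and, thanks to the $(1/\eps)$-offset, the segment $[v,v']$ lies at distance $\Omega(1/\eps)$ from $\partial LRC$, so $\log \mathrm{vol}\, P_v$ is approximately affine along the segment up to $O(\eps)$. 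Once approximate log-affinity is in hand, the stated inequality --- whose two sides each have coefficients summing to $1$ --- follows by substituting $\bar v = \theta v + (1-\theta) v'$ and collecting terms.

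The main obstacle is precisely the matching upper bound in Step 3, since Brunn--Minkowski only supplies one direction. Establishing approximate log-affinity on the segment will require expanding $\log \mathrm{vol}\, P_v$ by a Taylor expansion along the segment and bounding the quadratic remainder against a linear term that the deep-interior hypothesis makes $\Omega(1/\eps)$ times larger; the specific polynomial structure of the hive/BZ volume function (related to the vector partition function underlying $c$) should make this estimate effective, but it is the genuinely non-routine step and is the point at which the $(1/\eps)(\De,\De,\De')$ offset is actually used, rather than merely providing thickness for Step~1.
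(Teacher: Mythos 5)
Your Steps 1 and 2 are essentially the paper's proof: the paper works with the relaxed polytope $\qlmn$ (rhombus system $Ax - \blmn \preccurlyeq \vec{2}$, with $\blmn$ depending linearly on the boundary data), notes the Minkowski inclusion $\theta Q_{\la\mu}^{\nu} + (1-\theta)Q_{\la'\mu'}^{\nu'} \subseteq Q_{\theta\la+(1-\theta)\la',\,\dots}$, applies Brunn--Minkowski together with concavity of the logarithm, and then converts volumes into Littlewood--Richardson coefficients via Lemma~\ref{lem:z}, whose hypothesis is exactly what the deep offset $(1/\eps)(\De,\De,\De')$ buys (an inscribed ball of radius $\binom{n}{2}/(2\eps)$, Lemma~\ref{lem:6}). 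So up to that point you have reconstructed the intended argument.

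Step 3, however, is a genuine gap and cannot be repaired: the ``approximate log-affinity'' you need is false, and the depth hypothesis does not help. The hive inequalities are $Ax \preccurlyeq b_v$ with $b_v$ linear in $v$, so $P_{sv} = s\,P_v$ exactly; take $v$ deep in $(1/\eps)(\De,\De,\De')+LRC$, $v' = 2v$ (also deep) and $\theta = \tfrac12$. Then $\log \vol P_{\frac32 v} - \tfrac12\log\vol P_v - \tfrac12\log\vol P_{2v} = \binom{n}{2}\bigl(\log\tfrac32 - \tfrac12\log 2\bigr) \approx 0.06\binom{n}{2}$, which is $\Theta(n^2)$ and not $O(\eps)$, no matter how far the segment lies from $\partial LRC$. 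Depth controls the lattice-count-versus-volume error and boundary perturbations, but has no bearing on the curvature of $t \mapsto \log\vol P_{v+t(v'-v)}$, which is genuinely of order $\binom{n}{2}$ along rays. The same example shows that the theorem read literally --- with $\bar v = \theta v + (1-\theta)v'$ the convex combination appearing on the right-hand side --- is false; the printed hypothesis carries a label permutation. The statement the paper actually proves (and the one consistent with its displayed Brunn--Minkowski step) is the approximate log-concavity in which the convexly combined index is the one on the left-hand side, i.e.\ with the hypothesis read as $(\la,\mu,\nu) = \theta(\la',\mu',\nu') + (1-\theta)(\bar\la,\bar\mu,\bar\nu)$. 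Under that reading your Steps 1--2 already complete the proof, and Step 3 is both unnecessary and unattainable.
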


\section{Preliminaries}
\subsection{Group Representations}

Suppose $V$ is a complex vector space and $G$ and $GL(V)$ are respectively a group and the group of automorphisms of $V$, then
given a homomorphism $\rho: G \ra GL(V)$, we call $V$ a representation of $V$. If no non-trivial proper subspace of $V$ is mapped to itself by all $g \in G$,  $V$ is said to be irreducible. Littlewood-Richardson coefficients appear in the representation theory of the general linear group
$GL_n(\C)$. Suppose $V_\la$, $V_\mu$ and $V_\nu$ are irreducible representations of $GL_n(\C)$. The Littlewood-Richardson coefficient $c_{\la \mu}^\nu$ is the multiplicity of $\Vn$ in $\Vl \otimes \Vm$.
\beq\Vl \otimes \Vm = \bigoplus_\nu \Vn^{\clmn}.\eeq

\subsection{Hive model and rhombus inequalities for Littlewood-Richardson coefficients}

\begin{figure}\label{fig:tri}
\begin{center}
\includegraphics[scale=0.4]{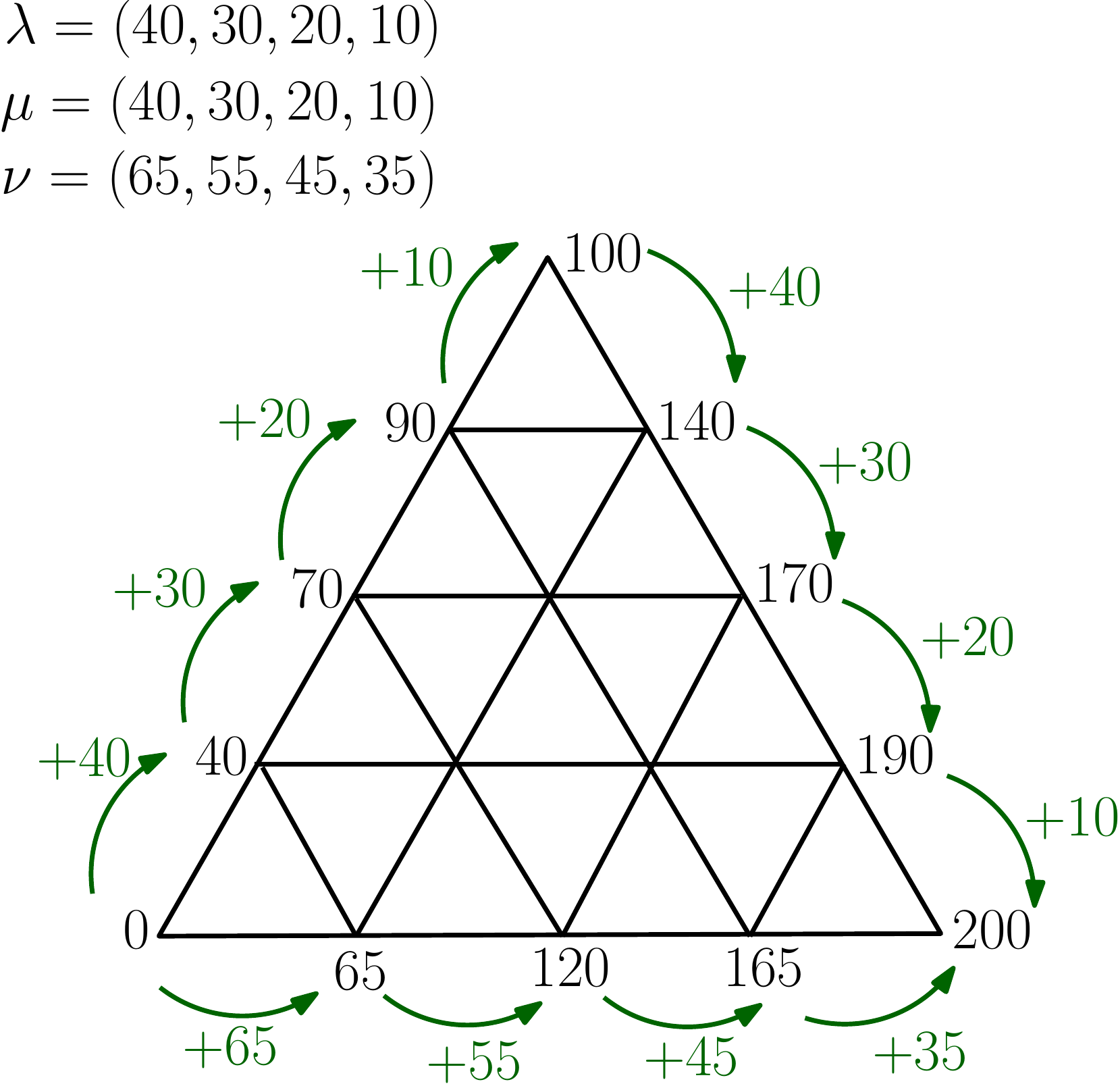}
\caption{Hive model for LR coefficients}
\end{center}
\end{figure}

Let $\la, \mu, \nu$ be vectors in $\Z^n$ whose entries are non-increasing non-negative integers. In all subsequent appearances, this will be assumed of $\la, \mu$ and $\nu$. Let the sum of the entries of a vector $\a$ be denoted $|\a|$.  Further, let $$|\la| + |\mu| = |\nu|.$$ Take an equilateral triangle $\Delta_n$ of side $n$. Tessellate it with unit equilateral triangles. Assign boundary values to $\Delta_n$ as in Figure 1; Clockwise, assign the values $0, \la_1, \la_1 + \la_2, \dots, |\la|, |\la| + \mu_1, \dots, |\la| + |\mu|.$ Then anticlockwise, on the horizontal side, assign  $$0, \nu_1, \nu_1 + \nu_2, \dots, |\nu|.$$

\begin{figure}\label{fig:tri3}
\begin{center}
\includegraphics[scale=0.4]{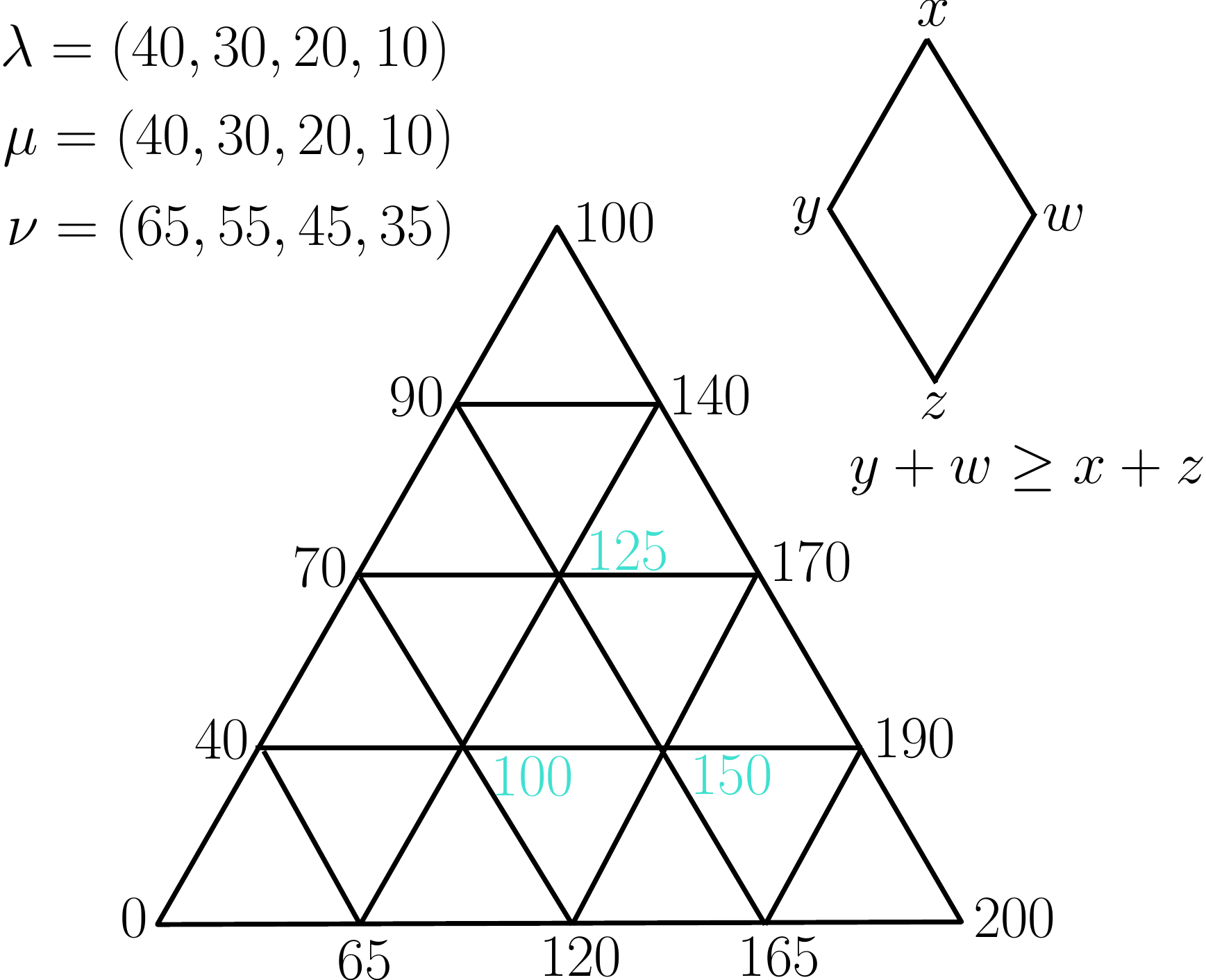}
\caption{Interior nodes in the hive model for LR coefficients}
\end{center}
\end{figure}

Knutson and Tao defined this hive model for Littlewood-Richardson coefficients in \cite{KT1}. They proved that the Littlewood-Richardson coefficient
$\clmn$ is given by the number of ways of assigning integer values to the interior nodes of the triangle, such that the piecewise linear extension to the interior of $\Delta_n$ is a concave function $f$ from $\Delta_n$ to $\R$. Another way of stating this condition is that for every rhombus such as that in Figure 2, if the values taken at the nodes are $w, x, y, z$ where $y$ and $w$ correspond to $120^\circ$ angles, then $y + w \geq x + z$. Let $L$ denote the unit triangular lattice that subdivides $\De_n$. We refer to any map from $\Delta_n \cap L$ to $\Z$ that satisfies the rhombus inequalities as a {\it hive} .

\begin{defn}
 Let $\plmn$ be a subset of $\R^{n \choose 2}$. Let the canonical basis correspond to the set of interior nodes in the corresponding hive. (see Figure 2).
 Let $\plmn$ denote the hive polytope corresponding to $(\la, \mu, \nu)$ defined by the above inequalities, one for every unit rhombus. 
\end{defn}

\section{A Randomized Approximation Scheme}

\begin{center}
\includegraphics[scale=0.3]{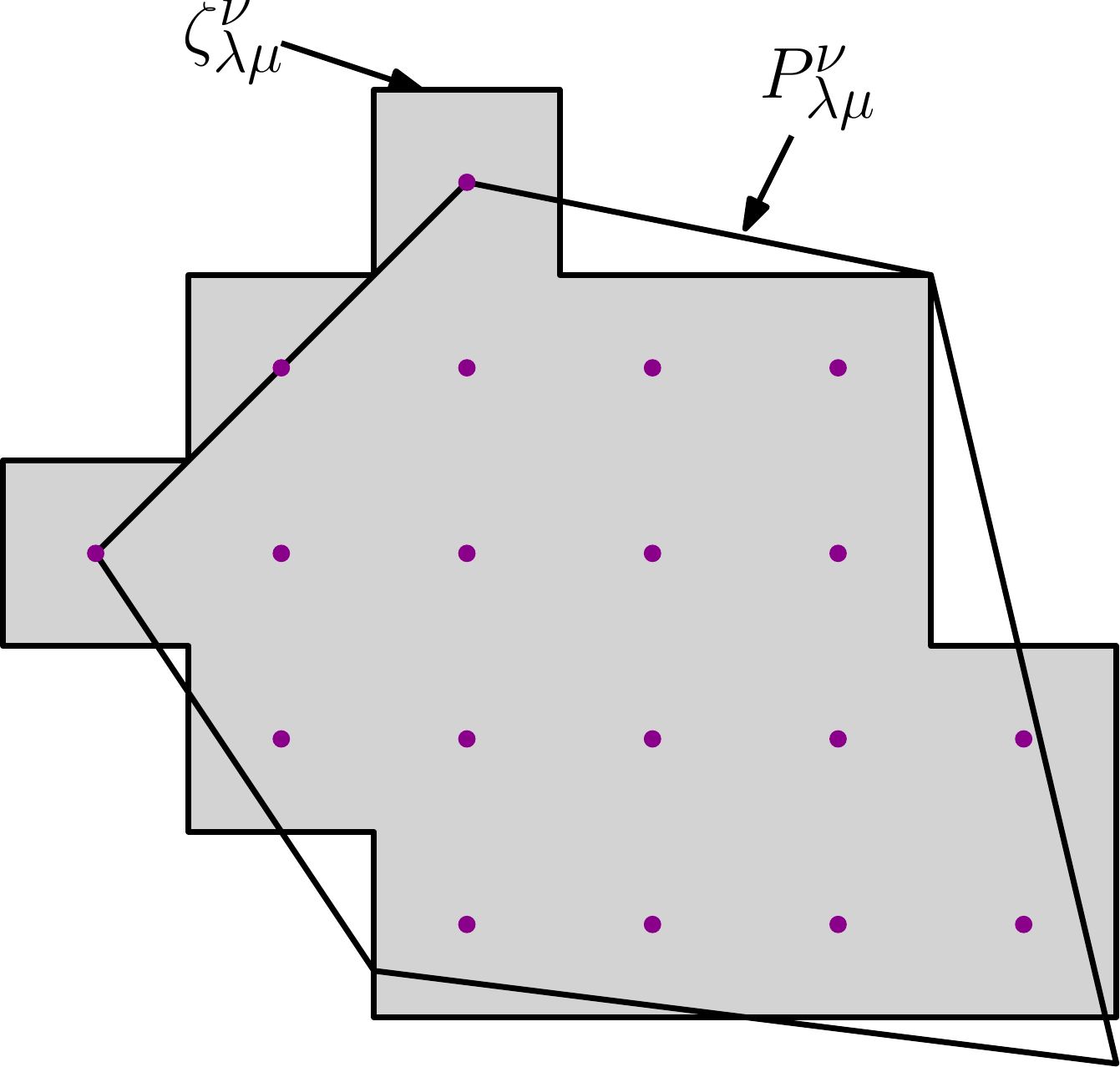}
\end{center}

%Given a convex set $K$ and $\de >0$, let $K[-\de]$ denote the set of all points in $K$ whose distance to the complement of $K$, $\R^d\setminus K$ is %at least $\de$. By the convexity of $K$ given any two points $x, y \in K[-\de]$, a $\delta-$neighborhood of the line segment joining $x$ and $y$ lies %in $K$ and so the line segment joining $x$ and $y$ lies in $K[-\de]$.

The number of points in $\plmn \cap \Z^{n\choose 2}$ is equal to the volume of the set  \beq  \zeta_{\la\mu}^\nu := \left\{x \big | \inf\limits_{y\in P_{\la\mu}^\nu \cap \Z^{{n \choose 2}}}\| x - y\|_\infty < \frac{1}{2}\right\}.\eeq

Let $\plmn$ be described as
\beq\lab{eq:blmn} Ax - \blmn \preccurlyeq \vec{0},\eeq where the rows of $A$ correspond to rhombus inequalities.

We make the following observation.
\begin{obs}\lab{obs:1}
 Each row of $A$ has at most $4$ non-zero entries. Each of these entries equals $\pm 1$.
\end{obs}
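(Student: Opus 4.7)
The plan is to unpack the definition of the rhombus inequalities in the hive model and then translate them into the standard form $Ax - b \preccurlyeq \vec{0}$ used in \eqref{eq:blmn}.

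First I would recall that every row of $A$ comes from a single unit rhombus in the tessellation of $\Delta_n$. A unit rhombus is the union of two adjacent unit triangles sharing an edge, and hence has exactly four vertices $w, x, y, z \in \Delta_n \cap L$, two of which ($y$ and $w$) meet at $120^\circ$ angles and two of which ($x$ and $z$) meet at $60^\circ$ angles. The associated concavity constraint is $y + w \geq x + z$, which rearranged reads
\[
(-1)\cdot y \;+\; (-1)\cdot w \;+\; (+1)\cdot x \;+\; (+1)\cdot z \;\leq\; 0.
\]
Thus, viewed as a linear form on $\R^{\Delta_n \cap L}$, the rhombus inequality involves exactly four lattice points, each with coefficient $\pm 1$.

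Next I would pass from the ambient space $\R^{\Delta_n \cap L}$ to the coordinate space $\R^{n \choose 2}$ of $\plmn$, whose basis is indexed by the \emph{interior} nodes. The boundary values at the nodes of $\partial \Delta_n$ are fixed constants determined by $(\la,\mu,\nu)$. For each of the four vertices $w, x, y, z$ of a given rhombus, it either lies in the interior (in which case its $\pm 1$ coefficient appears in the corresponding row of $A$) or on the boundary (in which case its contribution is a known constant that gets absorbed, with the opposite sign, into the corresponding entry of $\blmn$). Consequently, the row of $A$ retains at most four $\pm 1$ entries, and the number of nonzero entries is exactly the number of interior vertices of that rhombus.

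There is essentially no obstacle here: the claim is an immediate structural consequence of the form of the rhombus inequality together with the convention of parameterizing $\plmn$ by interior nodes only. The only mild care needed is in verifying that no two vertices of the same rhombus coincide (which is clear, since the four vertices are distinct lattice points of $L$), so that no cancellation or doubling of coefficients occurs and each nonzero entry remains $\pm 1$.
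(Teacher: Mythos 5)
Your proposal is correct and is exactly the argument the paper leaves implicit: each row of $A$ encodes a single rhombus inequality $x + z - y - w \le 0$, so the coefficients are $\pm 1$ on the four vertices of that rhombus, and any vertex lying on $\partial\Delta_n$ contributes a constant absorbed into $\blmn$ rather than a nonzero entry of $A$. Nothing more is needed, and your route matches the paper's.
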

 The vector $\blmn$  depends on $\la, \mu, \nu$ in a way that reflects the boundary conditions of the hive.

Let $\qlmn$ denote the polytope defined by the inequalities
\beq\lab{eq:qlmn} Ax - \blmn \preccurlyeq \vec{2}.\eeq

Let $\olmn$ denote the polytope defined by the inequalities
\beq\lab{eq:olmn} Ax - \blmn \preccurlyeq -\vec{ 2}.\eeq

By (\ref{eq:blmn}), (\ref{eq:qlmn}) and (\ref{eq:olmn}), \beq\lab{eq:z} \olmn \subseteq \zeta_{\la\mu}^\nu \subseteq \qlmn.\eeq

  In what follows, we were strongly influenced by a result of Kannan and Vempala, who  show in \cite{KannanVempala} that the number of integer points in an $n-$dimensional polytope with $r-$faces containing  a Euclidean ball of radius $O(n\sqrt{\log r})$ is within a constant factor of the volume. However, by exploiting Observation~\ref{obs:1}, we avoid the multiplicative factor of  $\sqrt{\log r}$.

\begin{lemma}\lab{lem:6}
If $(\lambda, \mu, \nu) \in (\De_\eps, \De_\eps, \De_\eps') +  LRC$, then $\plmn$ contains a Euclidean ball of radius $\frac{{n \choose 2}}{2\eps}$.
\end{lemma}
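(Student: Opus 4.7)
The plan is to exhibit a specific point in $\plmn$ with so much slack in every rhombus inequality that a ball of the required radius fits around it. By Observation~\ref{obs:1}, each rhombus inequality has coefficient vector of Euclidean norm $2$. Thus, a point satisfying every rhombus inequality with slack at least $s$ lies at Euclidean distance at least $s/2$ from each bounding hyperplane of $\plmn$, so a ball of radius $s/2$ around it lies in $\plmn$. It therefore suffices to produce a hive with boundary $(\lambda,\mu,\nu)$ for which every rhombus slack is at least $\binom{n}{2}/\eps$.

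I would first build a reference hive $h_0$ with boundary exactly $(\De_\eps,\De_\eps,\De_\eps')$. Using skew coordinates $(x,y)$ with $x,y\geq 0$ and $x+y\leq n$ on the lattice points of $\Delta_n$ (placing the $\nu$ side along $y=0$ and the $\lambda$ side along $x=0$), take the quadratic
\[
h_0(x,y) \;=\; \frac{1}{\eps}\bigl(-n^2 x^2 - n^2 y^2 - n^2 xy + (3n^3+2n^2)\,x + (2n^3+n^2)\,y\bigr).
\]
A direct expansion of partial sums shows that the restrictions of $h_0$ to the three sides $y=0$, $x=0$, $x+y=n$ reproduce the prescribed boundary data $\De_\eps'$, $\De_\eps$, $\De_\eps$ respectively; the arithmetic-progression structure of $\De_\eps,\De_\eps'$ was set up precisely so that the cumulative sums on each side are quadratic and this ansatz closes.

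Next I compute the rhombus slack of $h_0$. Since $h_0$ is quadratic with constant Hessian $H = (1/\eps)\begin{pmatrix} -2n^2 & -n^2 \\ -n^2 & -2n^2 \end{pmatrix}$, for any unit rhombus with obtuse pair $\{Y,W\}$ and acute pair $\{X,Z\}$ sharing midpoint $M$, writing $q=Y-M$ and $p=X-M$ we get $h_0(Y)+h_0(W)-h_0(X)-h_0(Z) = q^{\tr}Hq - p^{\tr}Hp$. Running this through for each of the three orientations of unit rhombi yields exactly $n^2/\eps$ every time.

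Finally, for a general $(\lambda,\mu,\nu)\in(\De_\eps,\De_\eps,\De_\eps')+LRC$, write $(\lambda,\mu,\nu)=(\De_\eps,\De_\eps,\De_\eps')+(\lambda',\mu',\nu')$ with $(\lambda',\mu',\nu')\in LRC$. By the hive-model characterization of $LRC$, the real hive polytope with boundary $(\lambda',\mu',\nu')$ is non-empty; pick any $h_1$ in it. Since every rhombus inequality is linear and $h_1$ satisfies it with slack $\geq 0$, the sum $h_0+h_1$ is a hive with boundary $(\lambda,\mu,\nu)$ and every rhombus slack $\geq n^2/\eps$. By the opening paragraph, the Euclidean ball of radius $n^2/(2\eps) \geq \binom{n}{2}/(2\eps)$ centered at $h_0+h_1$ is contained in $\plmn$. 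The main task is guessing the right quadratic $h_0$; once the candidate is written down, the boundary match and slack computation are mechanical.
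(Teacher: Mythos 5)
Your proof is correct and follows the same strategy as the paper's: build a quadratic reference hive with boundary $(\De_\eps, \De_\eps, \De_\eps')$ having large uniform rhombus slack, translate it by a real hive for $(\la',\mu',\nu') = (\la,\mu,\nu) - (\De_\eps, \De_\eps, \De_\eps') \in LRC$, and convert slack to a Euclidean inradius via Observation~\ref{obs:1}. Two points in your argument deserve emphasis. First, your reference quadratic $h_0$ carries the cross term $-n^2xy/\eps$, so its quadratic part is a negative multiple of $x^2+xy+y^2$, the isotropic negative-definite form for the triangular lattice. This is exactly what makes all three orientations of unit rhombi have the same slack $n^2/\eps$; for a quadratic without the cross term, of the form $-c(u^2+v^2)$ as in the paper's center $f$, the rhombus $\{(0,0),(1,0),(1,1),(0,1)\}$ has slack zero, so including the cross term is not cosmetic but essential, and your $h_0$ is the right quadratic here. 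Second, you convert slack to radius using the Euclidean norm $\|a_i\|_2 \le 2$ of the constraint rows, giving a ball of radius $(\text{slack})/2 = n^2/(2\eps)$, whereas the paper passes through the $\ell_\infty$ cube and gets $(\text{slack})/4$; your final radius $n^2/(2\eps) \ge {n \choose 2}/(2\eps)$ is therefore a constant factor larger than the lemma claims. A minor simplification: you only invoke the Knutson--Tao--Woodward description of $LRC$ (nonemptiness of the real hive polytope) rather than the saturation theorem, which is sufficient for this step.
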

\begin{proof}
By Results of Knutson-Tao \cite{KT1} and Knutson-Tao-Woodward \cite{KTW2}, for every integer point $({\la}', \mu', \nu')$ in LRC,  $c_{{\la}' \mu'}^{\nu'} > 0$.  Fix $\la' = \la - {\De_\eps}$, $\mu' = \mu - {\De_\eps}$ and $\nu' = \nu - {\De'_\eps}$. Let  $z$ denote an arbitrary integer point in    $P_{{\la}'\mu'}^{\nu'}$ (which exists by the last remark).
By (\ref{eq:blmn}) and the corresponding set of inequalities for $P_{\De_\eps\De_\eps}^{\De'_\eps}$, we see that $z + P_{\De_\eps\De_\eps}^{\De'_\eps} \subseteq \plmn$. To prove the lemma, it thus suffices to show that
$P_{\De_\eps\De_\eps}^{\De'_\eps}$ contains a ball of radius ${n \choose 2}$. We first describe the center of this ball. Let $(0, 0)$ be the leftmost -- bottommost corner of the hive in question.
Here $u$ is the coordinate corresponding to the $x$ direction, and $v$ is the coordinate corresponding to the $x/2 + \frac{\sqrt{3}}{2}y$ direction.
Consider the restriction of the following function to $\De_n \cap L$:
$$f(u, v) = \left(\frac{1}{\eps}\right)\left((3n^3 +  n^2) u + (2 n^3 ) v - (n^2 - n)(u^2 + v^2)\right).$$ This corresponds to a hive. Moreover, the resulting vector $x_f$ satisfies
\beq\lab{eq:rlmn} Ax_f - b_{\De_\eps\De_\eps}^{\De'_\eps} \preccurlyeq - \left(\frac{2}{\eps}\right)\vec{{n\choose 2}}.\eeq
Therefore, by Observation~\ref{obs:1}, for any vector $y \in \R^{{n \choose 2}}$  such that $\|y\|_\infty \leq  \frac{{n \choose 2}}{2\eps}$.
\beq\lab{eq:rlmn} A(x_f + y) - b_{\De_\eps\De_\eps}^{\De'_\eps} \preccurlyeq \vec{0}.\eeq In particular,
this means that $\plmn$ contains a ball of radius $\frac{{n \choose 2}}{2\eps}$.
\end{proof}

\begin{lemma}\lab{lem:z} If $n > C$, and $n^2 \eps^{-1} \in \N$ then if $(\lambda, \mu, \nu) \in (\De_\eps, \De_\eps, \De'_\eps) +  LRC$
 \beq  1 - {C}{\eps} \leq \frac{\vol\, \zlmn}{\vol\, \qlmn} \leq  1.\eeq
 \end{lemma}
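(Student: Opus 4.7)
The upper inequality $\vol\,\zlmn \leq \vol\,\qlmn$ is immediate from the inclusion $\zlmn \subseteq \qlmn$ established in (\ref{eq:z}), so the substantive content is the lower bound. Since (\ref{eq:z}) also gives $\olmn \subseteq \zlmn$, it suffices to prove $\vol\,\olmn \geq (1 - C\eps)\vol\,\qlmn$. The plan is to exhibit a contraction centered at a deep interior point that sends $\qlmn$ into $\olmn$, and then read off the volume ratio from the contraction factor.

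First I would extract a ``deep center'' $c \in \plmn$ from the construction in the proof of Lemma~\ref{lem:6}. Setting $c := z + x_f$, where $z$ is an integer point in $P^{\nu'}_{\la'\mu'}$ and $x_f$ is the hive associated with the quadratic $f$, additivity of the boundary data $\blmn$ in $(\la,\mu,\nu)$ combined with the slack established in (\ref{eq:rlmn}) yields
$$Ac - \blmn \preccurlyeq -\left(\frac{2}{\eps}\right)\vec{{n \choose 2}}.$$
So $c$ is buried well inside $\plmn$ with explicit slack proportional to $1/\eps$ in every rhombus inequality.

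Next, I would perform a contraction about $c$ with parameter $t \in (0,1)$. For any $x \in \qlmn$, the point $x' := c + t(x-c)$ satisfies, by linearity,
$$A x' - \blmn = t(Ax - \blmn) + (1-t)(Ac - \blmn) \preccurlyeq \left(2t - (1-t)\frac{2{n \choose 2}}{\eps}\right)\vec{1}.$$
Requiring the right side to be $\preccurlyeq -\vec{2}$ (so that $x' \in \olmn$) amounts to $t \leq \frac{{n \choose 2} - \eps}{{n \choose 2} + \eps}$. Taking $t$ equal to this threshold, the image $c + t(\qlmn - c)$ lies in $\olmn$, hence
$$\vol\,\olmn \;\geq\; t^{{n \choose 2}}\,\vol\,\qlmn \;\geq\; \left(1 - \frac{2\eps}{{n \choose 2} + \eps}\right)^{{n \choose 2}} \vol\,\qlmn \;\geq\; (1 - 2\eps)\,\vol\,\qlmn,$$
using $(1-x)^k \geq 1 - kx$ for $x \in [0,1]$. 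Choosing $C \geq 2$ completes the proof.

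The only real obstacle is locating the deep center $c$ with slack of order $1/\eps$; this is exactly what Lemma~\ref{lem:6} (and the quadratic hive $f$ in its proof) is set up to supply, so the remaining steps are the routine convex-geometric contraction and a Bernoulli inequality. The sparsity observation (Observation~\ref{obs:1}) is used implicitly through Lemma~\ref{lem:6} to convert the coordinate-wise slack into a genuine Euclidean neighborhood, but is not needed again here because the contraction argument works directly on the rhombus inequalities.
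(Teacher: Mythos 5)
Your proof is correct and takes essentially the same route as the paper's: both locate the deep interior point $c = z + x_f$ supplied by the construction in Lemma~\ref{lem:6}, contract $\qlmn$ about $c$ so that the image lands inside $\olmn \subseteq \zlmn$, and bound the resulting volume ratio. You have merely made explicit the ``elementary geometry'' step that the paper delegates to a nested lemma (the contraction factor $t = \bigl({n\choose 2}-\eps\bigr)/\bigl({n\choose 2}+\eps\bigr)$ read directly off the coordinate-wise slack in the rhombus inequalities, followed by a Bernoulli estimate in place of the paper's $e^{-C_1\eps}$ bound), so the two arguments coincide in substance.
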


\begin{proof}
Let $\la' = \la - {\De_\eps}$, $\mu' = \mu - {\De_\eps}$ and $\nu' = \nu - {\De'_\eps}$. By Results of Knutson-Tao \cite{KT1} and Knutson-Tao-Woodward \cite{KTW2}, $P_{\la'\mu'}^{\nu'}$ contains an integer point. Let this point be $z$.
\begin{lemma}
Let the origin be translated so that centered at $z + (\De_\eps, \De_\eps, \De'_\eps)$ is the new origin. Let $d:= {n \choose 2}$ and  $n \geq C$. Then,
\beq \left(1 + \frac{C_1\eps }{d}\right)\olmn \supseteq \qlmn. \eeq
\end{lemma}
\begin{proof}
This follows from Lemma~\ref{lem:6} and elementary geometry.
\end{proof}
Consequently
\beq \vol\, \zlmn \geq \left(1 + \frac{C_1\eps}{d}\right)^{-d}\left(\vol\, \qlmn\right).\eeq

Thus,
\begin{eqnarray} \vol\, \zlmn & \geq & \left(1 + \frac{C_1\eps}{d}\right)^{-d}\left(\vol\, \qlmn\right).\\
& \geq & e^{-C_1\eps}\left(\vol\,\qlmn\right). \end{eqnarray}

\end{proof}
The following theorem is a special case of Kirszbraun's Theorem \cite{Kirszbraun}.
\begin{theorem}\lab{thm:kz}
 if U is a subset of $\R^2$,  and

    $$f : U \ra \R$$

is a Lipschitz-continuous map, then there is a Lipschitz-continuous map

    $$F: \R^2 \ra \R$$

that extends $f$ and has the same Lipschitz constant as $f.$

\end{theorem}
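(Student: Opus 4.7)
The plan is to prove the stated special case directly, rather than invoking the full strength of Kirszbraun's theorem (which for general Hilbert-space targets requires a Helly-type argument). Because the codomain is $\R$, the classical McShane--Whitney construction provides an explicit formula for the extension, which is much simpler than the vector-valued case.

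First I would reduce to the case of Lipschitz constant $L=1$ by rescaling: if $f$ has Lipschitz constant $L$, replace $f$ by $f/L$, extend, and multiply back. Next, I would define
\[
F(x) := \inf_{u \in U}\bigl( f(u) + \|x - u\| \bigr), \qquad x \in \R^2,
\]
and verify two things. (a) $F$ agrees with $f$ on $U$: for $x \in U$ the choice $u=x$ gives $F(x)\leq f(x)$, and for any $u\in U$ the hypothesis $|f(u)-f(x)|\leq \|u-x\|$ yields $f(u)+\|x-u\|\geq f(x)$, so $F(x)\geq f(x)$. (b) $F$ is $1$-Lipschitz on $\R^2$: for $x,y\in\R^2$ and any $u\in U$ one has
\[
F(x) \leq f(u) + \|x-u\| \leq f(u) + \|y-u\| + \|x-y\|,
\]
and taking the infimum over $u$ gives $F(x)\leq F(y)+\|x-y\|$; symmetry completes the argument. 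Finally I would note that $F$ is well-defined (the infimum is finite because picking any $u_0\in U$ and using $f(u)\geq f(u_0)-\|u-u_0\|\geq f(u_0)-\|x-u_0\|-\|x-u\|$ gives a lower bound $f(u_0)-\|x-u_0\|$ independent of $u$).

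There is essentially no obstacle here: the only place where the ambient structure matters is in the triangle inequality, which is available in $\R^2$ with the Euclidean norm, so the same formula would work in any normed space with $\R$-valued target. The vector-valued Kirszbraun theorem is genuinely harder and uses Helly's theorem on intersections of closed balls, but that complication is avoided precisely because the target here is one-dimensional. If desired, one can also record the ``upper'' extension
\[
\tilde F(x) := \sup_{u \in U}\bigl( f(u) - \|x-u\|\bigr),
\]
which is the maximal $1$-Lipschitz extension of $f$ (while $F$ is the minimal one), showing that the extension is generally not unique.
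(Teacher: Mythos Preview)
Your argument is correct and complete: the McShane--Whitney formula $F(x)=\inf_{u\in U}(f(u)+\|x-u\|)$ does give a Lipschitz extension with the same constant, and your verification of the three required properties (finiteness of the infimum, agreement on $U$, and $1$-Lipschitz continuity) is clean. The only slip is in the closing aside: the infimum formula $F$ is the \emph{maximal} $1$-Lipschitz extension (any other extension $G$ satisfies $G(x)\le f(u)+\|x-u\|$ for every $u\in U$, hence $G\le F$), and the supremum formula $\tilde F$ is the minimal one; you have the labels reversed. This does not affect the proof of the theorem itself. You might also want to dispose of the trivial case $L=0$ separately before dividing by $L$.

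As for comparison with the paper: the paper does not prove this statement at all. It is recorded as a special case of Kirszbraun's theorem and simply cited to Kirszbraun's 1934 paper, then used as a black box in the subsequent lemma on rhombus inequalities. Your approach is therefore strictly more informative than the paper's treatment, and it is also the right one to give here: since the target is $\R$, the elementary McShane--Whitney construction suffices and one avoids the Helly-type intersection-of-balls argument that the genuine vector-valued Kirszbraun theorem requires.
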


\begin{lemma}\lab{lem:cor:1}
Given $(\a, \b, \theta)$ where each each vector is in $ \R^n$ and $$\sum_i \a_i + \b_i = \sum_i \theta_i,$$ suppose that $4\|(\a, \b, \theta)\|_\infty \leq \de.$ Assign boundary values to $\Delta_n$ as in Figure 1; Clockwise, assign the values $0, \a_1, \a_1 + \a_2, \dots, |\a|, |\a| + \b_1, \dots, |\a| + |\b|.$ Then anticlockwise, on the horizontal side, assign  $$0, \theta_1, \theta_1 + \theta_2, \dots, |\theta|.$$
Then there exists a assignment of real values to the interior nodes of $\De_n$ that has the property that the Lipschitz constant of the resulting map from $\De_n \cap L$ to $\R$ is less or equal to $\de/2$. Consequently no rhombus inequality of the form $w + y - x - z \geq 0$ (see Figure 2) is violated by more than $\de$.
\end{lemma}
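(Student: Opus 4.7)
The plan is to reduce the lemma to Kirszbraun's theorem (Theorem~\ref{thm:kz}) applied to the prescribed boundary values. First I would check that the boundary data defines a map $f_\partial: \partial \De_n \cap L \to \R$ that is $(\de/2)$-Lipschitz with respect to the Euclidean metric. Granting this, Theorem~\ref{thm:kz} extends $f_\partial$ to a $(\de/2)$-Lipschitz map $F: \R^2 \to \R$, and the restriction of $F$ to the interior nodes of $\De_n \cap L$, combined with the original boundary values, gives the desired assignment.

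The main geometric step is the boundary Lipschitz check. Along any single side of $\De_n$, two boundary nodes at positions $i$ and $j$ differ in value by a telescoping sum of $|i-j|$ consecutive components of $\a$, $\b$, or $\theta$, each of absolute value at most $\|(\a,\b,\theta)\|_\infty \le \de/4$; their Euclidean distance is $|i-j|$, giving a local Lipschitz constant of $\de/4$. The delicate case is a pair $p, q$ of boundary nodes lying on two different sides that share a common corner $V$. Writing $i$ and $j$ for the lattice distances from $V$ to $p$ and $q$ respectively, the triangle inequality bounds
\[
|f_\partial(p) - f_\partial(q)| \;\le\; |f_\partial(p) - f_\partial(V)| + |f_\partial(V) - f_\partial(q)| \;\le\; (i + j)\,\de/4,
\]
while the law of cosines with the $60^\circ$ angle at $V$ gives Euclidean distance $\sqrt{i^2 + j^2 - ij}$. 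The desired bound $|f_\partial(p) - f_\partial(q)| \le (\de/2)\sqrt{i^2+j^2 - ij}$ then reduces to $(i+j)^2 \le 4(i^2 + j^2 - ij)$, which is equivalent to $3(i-j)^2 \ge 0$.

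Once $f_\partial$ is known to be $(\de/2)$-Lipschitz, Kirszbraun's theorem delivers the extension $F$, and restricting $F$ to $\De_n \cap L$ yields a map with Lipschitz constant at most $\de/2$ on the whole lattice. For the final claim about rhombus inequalities, let $w, x, y, z$ denote the values at the four corners of a unit rhombus, with $w$ and $y$ at the two $120^\circ$ vertices and $x, z$ at the two $60^\circ$ vertices. Each of the four edges of the rhombus has unit length, so $|w - x|, |y - z| \le \de/2$, whence
\[
|w + y - x - z| \;=\; |(w - x) + (y - z)| \;\le\; \de,
\]
so in particular $w + y - x - z \ge -\de$, meaning no rhombus inequality is violated by more than $\de$. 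The only nontrivial step, and hence the main obstacle, is the geometric inequality establishing the boundary Lipschitz bound at a shared corner; the remainder is essentially bookkeeping.
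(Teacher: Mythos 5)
Your proposal is correct and follows essentially the same route as the paper: bound the Lipschitz constant of the boundary data (the only nontrivial case being two nodes on sides meeting at a $60^\circ$ corner, handled via the triangle inequality through the corner together with the law of cosines), apply Kirszbraun to extend, and then observe that each rhombus quantity $w+y-x-z=(w-x)+(y-z)$ is at most twice the Lipschitz constant. Your algebraic step $(i+j)^2 \le 4(i^2+j^2-ij)$, i.e.\ $3(i-j)^2\ge 0$, is just the paper's AM--GM manipulation written out explicitly, and your explicit same-side/different-side case split makes precise what the paper leaves implicit in its ``worst-case $60^\circ$ configuration'' phrasing.
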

\begin{proof} We apply Theorem~\ref{thm:kz} after setting $U$ to be the the set of boundary vertices of $\De_n$. In order to get an upper bound on the Lipschitz onstant of $f$ corresponding to boundary data $\a, \b, \t$, it suffices to consider the worst-case geometrical configuration involving a 60 degree angle. More precisely, we can bound the Lipschitz constant by
\beq \sup_{ABC}\left(\de/4\right) \frac{|AB| + |BC|}{|AC|}.\eeq
as $ABC$ ranges over all triangles having a 60 degree angle at $B$. By the cosine law, \beq |AB|^2 + |BC|^2 - |AB|\cdot|BC| = |AC|^2.\eeq
Therefore, using the A.M-G.M inequality $$(|AB| + |BC|)^2 -  \frac{3}{4} (|AB| + |BC|)^2 \leq |AC|^2.$$ This tells us that
$$\frac{|AB| + |BC|}{|AC|} \leq 2.$$
Therefore, The Lipschitz constant of $f$ is at most \beq \sup_{ABC}\left(\de/4\right) \frac{|AB| + |BC|}{|AC|} \leq \de/2.\eeq  We apply Theorem~\ref{thm:kz} and note that $w + y - x - z = (w - x) + (y - z)$ and so the maximum value  of $w + y - x - z$ (see Figure 2) taken around a unit rhombus is twice the Lipschitz constant. This proves the Lemma.
\end{proof}
\begin{defn}
We denote the set of points within a distance $\delta$ of a convex body $K$
(including $K$ itself) by $K_\de$. This is called the
{\it outer parallel body} of $K$ and is convex. The set of points
at a distance $\geq \delta$ to $\R^n\setminus K$ shall be denoted $K_{-\de}$.
This is called the {\it inner parallel body} of $K$ and is
convex as well.
\end{defn}

\begin{figure}\label{fig:cone}
\begin{center}
\includegraphics[height=2in]{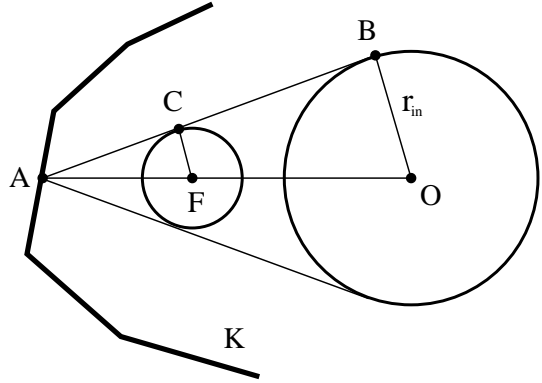}
\end{center}
\caption{$\Kmd$ contains $\left(1 - \frac{\delta}{r_{in}}\right)K$}
\end{figure}

The following Lemma appeared in \cite{heatflow}.
\begin{lemma}\label{lemma:1} Let $K$ contain a ball of radius $r_{in}$ centered at the origin. Then, 
$$
 \Kmd \supseteq \left(1 - \frac{\delta}{r_{in}} \right) K
$$
\end{lemma}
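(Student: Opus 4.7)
The plan is to unwind the definitions and reduce everything to the convexity of $K$ plus the homogeneity of scaling. Set $\lambda := \delta/r_{in}$; we may assume $\lambda \in [0,1]$, since otherwise $(1 - \delta/r_{in})K$ is empty or a single point at the origin (which lies in $K_{-\delta}$ whenever the latter is nonempty), and the claim is trivial. Pick an arbitrary $x \in (1-\lambda) K$, written as $x = (1-\lambda) y$ with $y \in K$. By the definition of $K_{-\delta}$ given in the preceding paragraph of the paper, showing $x \in K_{-\delta}$ reduces to verifying $B(x,\delta) \subseteq K$, i.e.\ that $x + v \in K$ for every $v \in \R^n$ with $\|v\| \leq \delta$.

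The key step is to express the displacement $v$ as a scaled element of $K$ using the inscribed ball. Since $\|v\| \leq \delta = \lambda r_{in}$, the vector $v/\lambda$ has norm at most $r_{in}$ and therefore lies in $B(0,r_{in}) \subseteq K$. Consequently
$$
  x + v \;=\; (1-\lambda)\, y \;+\; \lambda\,(v/\lambda)
$$
is a convex combination of two points of $K$, and hence lies in $K$ by convexity. This gives $B(x,\delta) \subseteq K$, so $x \in K_{-\delta}$, which establishes the desired inclusion $(1 - \delta/r_{in}) K \subseteq K_{-\delta}$.

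There is no real obstacle here: the lemma is a routine convex-geometric fact, and the only role played by the inscribed ball of radius $r_{in}$ is to guarantee that, after contracting by a factor of $(1-\lambda)$, there is enough slack along every direction to absorb a $\delta$-perturbation while remaining inside $K$. The argument is essentially the standard ``Minkowski sum" identity $(1-\lambda)K + \lambda B(0,r_{in}) \subseteq K$, which follows from convexity, specialized so that $\lambda r_{in} = \delta$.
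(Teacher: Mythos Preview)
Your proof is correct and is essentially the same convexity argument as the paper's, just presented algebraically: the paper phrases the step ``$(1-\lambda)y + \lambda(v/\lambda) \in K$'' geometrically, via the cone from a boundary point $A$ tangent to the inscribed ball and a similar-triangles computation of the inscribed radius at $F=(1-\lambda)A$. Your convex-combination formulation is cleaner and avoids the picture, but the content is identical.
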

\begin{proof}
Let $O$ be the center of the sphere of radius $r_{in}$ contained inside
$K$.   Let $A$ be a point on $\partial K$ and let $F :=  \left(1 - \frac{\delta}{r_{in}}\right) A$.  It suffices to prove that $F \in \Kmd$.

We construct the smallest cone from $A$ containing the sphere. Let
$B$ be a point where the cone touches the sphere of radius $r_{in}$ centered at the origin. We have $OB =
r_{in}$. Now consider the inscribed sphere centered at $F$. By
similarity of triangles, we have
$$
\frac{CF}{OB} = \frac{AF}{AO}.
$$
Noticing that $AF = \frac{\delta}{r_{in}}\, OA$, we obtain
$$
CF = OB \frac{AF}{AO} =  \delta.
$$
We thus see that the radius of the inscribed ball is $\delta$ and
hence the $\delta$-ball centered in $F$ is contained in $K$. Therefore, $F \in \Kmd$.

\end{proof}
\begin{lemma}\lab{lem:11}
Let $\bar\gamma := \frac{n^5}{\gamma}$. There is an absolute constant $C$ such that if $n > C$ and $0 < \bar\gamma < C^{-1}$, then the fraction of integer points \beq(\la, \mu, \nu) \in LRC \cap \{(\la, \mu, \nu)\big|\|(\la, \mu, \nu)\|_1 \leq \frac{C n^5}{\bar\gamma}\}\eeq such that $$(\la, \mu, \nu) \in (\De, \De, \De') + LRC$$ is greater or equal to $1 - C{\bar\gamma})$.
\end{lemma}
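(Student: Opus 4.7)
Write $\Gamma := Cn^5/\bar\gamma$ for the $\ell_1$ bound in the statement, so that $\Gamma > Cn^5$ whenever $\bar\gamma < 1$. Define $R_1 := LRC \cap \{(\la,\mu,\nu) : \|(\la,\mu,\nu)\|_1 \leq \Gamma\}$ and $R_2 := ((\De,\De,\De') + LRC) \cap \{\|(\la,\mu,\nu)\|_1 \leq \Gamma\}$. The goal is to show $N(R_1 \setminus R_2) \leq C\bar\gamma \cdot N(R_1)$, where $N(\cdot)$ is the integer lattice point count. Note that Lemma~\ref{lem:6} (specialized to $\eps=1$) exhibits an interior point of $P_{\De\De}^{\De'}$, so this hive polytope is nonempty; hence $(\De,\De,\De') \in LRC$, which gives $R_2 \subseteq R_1$.

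The key geometric step is the inclusion
\[
R_2 \supseteq (\De,\De,\De') + (1-\alpha) R_1, \qquad \alpha := \frac{\|(\De,\De,\De')\|_1}{\Gamma} = O\!\left(\frac{n^4}{\Gamma}\right).
\]
For any $p \in R_1$, the rescaled point $(1-\alpha)p$ lies in $LRC$ by the cone property, so $(\De,\De,\De') + (1-\alpha)p \in (\De,\De,\De') + LRC$; the triangle inequality then gives $\|(\De,\De,\De') + (1-\alpha)p\|_1 \leq \alpha\Gamma + (1-\alpha)\Gamma = \Gamma$, placing the shifted point back in the $\ell_1$ ball.

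Taking volumes yields $\vol(R_2) \geq (1-\alpha)^d \vol(R_1) \geq (1-d\alpha)\vol(R_1)$ with $d \leq 3n$ bounding the affine dimension of $R_1$, so that $\vol(R_2)/\vol(R_1) \geq 1 - O(n^5/\Gamma) = 1 - O(\bar\gamma)$.

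The main obstacle is converting this volume ratio into a ratio of integer lattice-point counts. My approach is to apply a lattice-counting estimate of the form $N(K) = \vol(K)(1 + O(d/\rho))$, valid when the convex body $K$ contains a Euclidean ball of radius $\rho \gg d$; an inradius of $\rho = \Omega(\Gamma/n^4)$ for both $R_1$ and $R_2$ would push the resulting error below $O(\bar\gamma)$. Establishing such an inradius bound is the delicate step: one would lift the hive-level ball construction of Lemma~\ref{lem:6} (built from the quadratic hive $f(u,v)$) to an explicit interior point of $LRC$ itself, and verify that this point has large margin against every defining inequality of $LRC$ — including all Horn/Klyachko inequalities — simultaneously, rather than just the rhombus inequalities at the hive level. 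This is where I would expect most of the work to lie, since the number of facets of $LRC$ can grow with $n$ and one must ensure every marginal check absorbs at most a constant factor.
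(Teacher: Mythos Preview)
Your overall skeleton matches the paper: reduce to a homothety $(1-\alpha)R_1$ with $\alpha = \|(\De,\De,\De')\|_1/\Gamma = O(n^4/\Gamma)$, take volumes to get a $(1-O(n\alpha))=(1-O(\bar\gamma))$ ratio, then pass from volumes to lattice-point counts using that both bodies have large inradius. The paper does the volume-to-lattice step via inner and outer parallel bodies at distance $\sqrt{3n}$ (so that $\vol D'_{-\sqrt{3n}} \le N(D')$ and $N(D)\le \vol D_{\sqrt{3n}}$) together with Lemma~\ref{lemma:1}, which is exactly the mechanism behind your heuristic $N(K)=\vol(K)(1+O(d/\rho))$.

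The substantive gap is the inradius bound, and your proposed route---checking margin against every Horn--Klyachko facet of $LRC$---is the wrong direction. Those facets are numerous and their combinatorics delicate; the paper bypasses them entirely. The trick is Lemma~\ref{lem:cor:1}: take the center $(\la^o,\mu^o,\nu^o)=\tfrac{n^2}{8\bar\gamma}(\De,\De,\De')$ and perturb the boundary data by any $(\alpha,\beta,\theta)$ with $4\|(\alpha,\beta,\theta)\|_\infty$ small. The perturbed boundary has small Lipschitz constant on $\partial\De_n$, so Kirszbraun's theorem extends it to all of $\De_n$ with the same Lipschitz constant; adding this extension to the quadratic hive $f$ from Lemma~\ref{lem:6} yields a valid hive with the perturbed boundary. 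Thus $(\la^o+\alpha,\mu^o+\beta,\nu^o+\theta)\in LRC$ is certified \emph{by exhibiting a hive}, not by checking facets. This gives an $\ell_\infty$-cube of side $\Omega(n^4/\bar\gamma)$ inside $D'$, hence the required Euclidean ball. Once you have this, your argument (and the paper's) goes through; without it, the facet-by-facet verification you outline is at best much harder and quite possibly infeasible in the form stated.
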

\begin{proof}
Let $D:= LRC \cap \{\|(\la, \mu, \nu)\|_1 \leq \frac{C n^5}{\bar\gamma}\}$.
For $(\la, \mu, \nu) \in LRC$, $\|(\la, \mu, \nu)\|_1 = \sum_{i=1}^n (\la_1 + \mu_1 + \nu_i)$. As a result, the  integer points inside \beq  \left(LRC + (\De, \De, \De')\right) \cap \{\|(\la, \mu, \nu)\|_1 \leq \frac{Cn^5}{\bar\gamma}\}\eeq
are in bijective correspondence (via subtraction of $(\De, \De, \De')$) with the integer points in
\beq D' :=  LRC  \cap \{(\la, \mu, \nu) \big | \|(\la, \mu, \nu)\|_1 \leq \frac{Cn^5}{\bar\gamma} - \|(\De, \De, \De')\|_1\}.\eeq
We will first show that $D'$ contains a ball whose radius is large. Suppose $(\la^o, \mu^o, \nu^o) = \left(\frac{n^2}{8\bar\gamma}\right)(\De, \De, \De')$. Suppose $4\|\a, \b, \t\|_\infty < \left(\frac{ (n^3 )}{16\bar\gamma}\right) {n \choose 2}.$ By Lemma~\ref{lem:cor:1} $$(\la^o + \a, \mu^o + \b, \nu^o + \t) \in D'.$$ Thus $D'$ contains a cube of side  $2R:=2\left(\frac{ n^2 }{16\bar\gamma}\right) {n \choose 2}$, and a hence a Euclidean ball of radius $R$ centered at $(\la^o, \mu^o, \nu^o)$. Let $D'_{-\sqrt{3n}}$ be the  inner parallel body of $D'$ at a distance of $\sqrt{3n}$.  Let $D_{\sqrt{3n}}$ be the  outer parallel body of $D'$ at a distance of $\sqrt{3n}$.  Then, we have the following lemma.
\begin{lemma}
Translate the origin to $(\la^o, \mu^o, \nu^o)$.  There is an absolute constant $C_1$, such that for $n \geq C_1$,  \beq \left(1 + \frac{\bar\gamma}{ n}\right) D'_{-\sqrt{3n}} \supseteq D_{\sqrt{3n}}.\eeq
\end{lemma}
\begin{proof}
Let $K := D_{\sqrt{3n}}$ and $\de := {\sqrt{3n}}$. Then, $K_{-\de} = D$, and applying Lemma~\ref{lemma:1}, we have  \beq \lab{eq:lll:1}D_{\sqrt{3n}} \subseteq \left(1 - \frac{\sqrt{3n}}{R + \sqrt{3n}} \right)^{-1}D.\eeq
 Next, because $D'$ and $D$ are homothetic cones (with respect to the common apex), we have
 \beq\lab{eq:lll:2} \left(1 - \frac{\bar\gamma\|(\De, \De, \De')\|_1}{Cn^5}\right) D \subseteq D'.\eeq
 Lastly, using Lemma ~\ref{lemma:1}, we have  \beq \lab{eq:lll:3}  \left(1 - \frac{\sqrt{3n}}{R}\right) D' \subseteq D'_{-\sqrt{3n}}.\eeq

The lemma follows from (\ref{eq:lll:1}), (\ref{eq:lll:2}) and (\ref{eq:lll:3}).
 \end{proof}
Therefore
\beq  \vol\, D'_{-\sqrt{3n}} & \geq &  e^{-{C}{\bar\gamma}} \left(1 + \frac{C\bar\gamma}{n}\right)^n \vol\, D'_{-\sqrt{3n}}\\ & \geq &  (1 - {C}{\bar\gamma}) \vol\, D_{\sqrt{3n}}.\eeq
  The volume $\vol\, D'_{-\sqrt{3n}}$ is less or equal to the number of lattice points in $D'$, since the union of all unit cubes centered at lattice points in $D'$ contains $ D'_{-\sqrt{3n}}$. Also, $\vol\, D_{\sqrt{3n}}$ is greater or equal to the number of lattice points in $D$, since the union of all unit cubes centered at lattice points in $D$ is contained in $ D_{\sqrt{3n}}$. This proves the lemma.
\end{proof}

%\begin{theorem}\lab{thm:10}
%If $(\lambda, \mu, \nu) \in (\De, \De, \De') +  LRC$, then $c_{\la \mu}^\nu$ can be approximated in randomized strongly polynomial time.
%using $O(\frac{n^5 polylognlog(1/\de)}{\eps^2})$ random bits and $O(\frac{n^{6 + 2\gamma} polylognlog(1/\de)}{\eps^2})$ arithmetic operations on numbers with bitlength $ log|\prod \nu_i| + polylogn$.
%\end{theorem}

 \begin{center}
\includegraphics[scale=0.6]{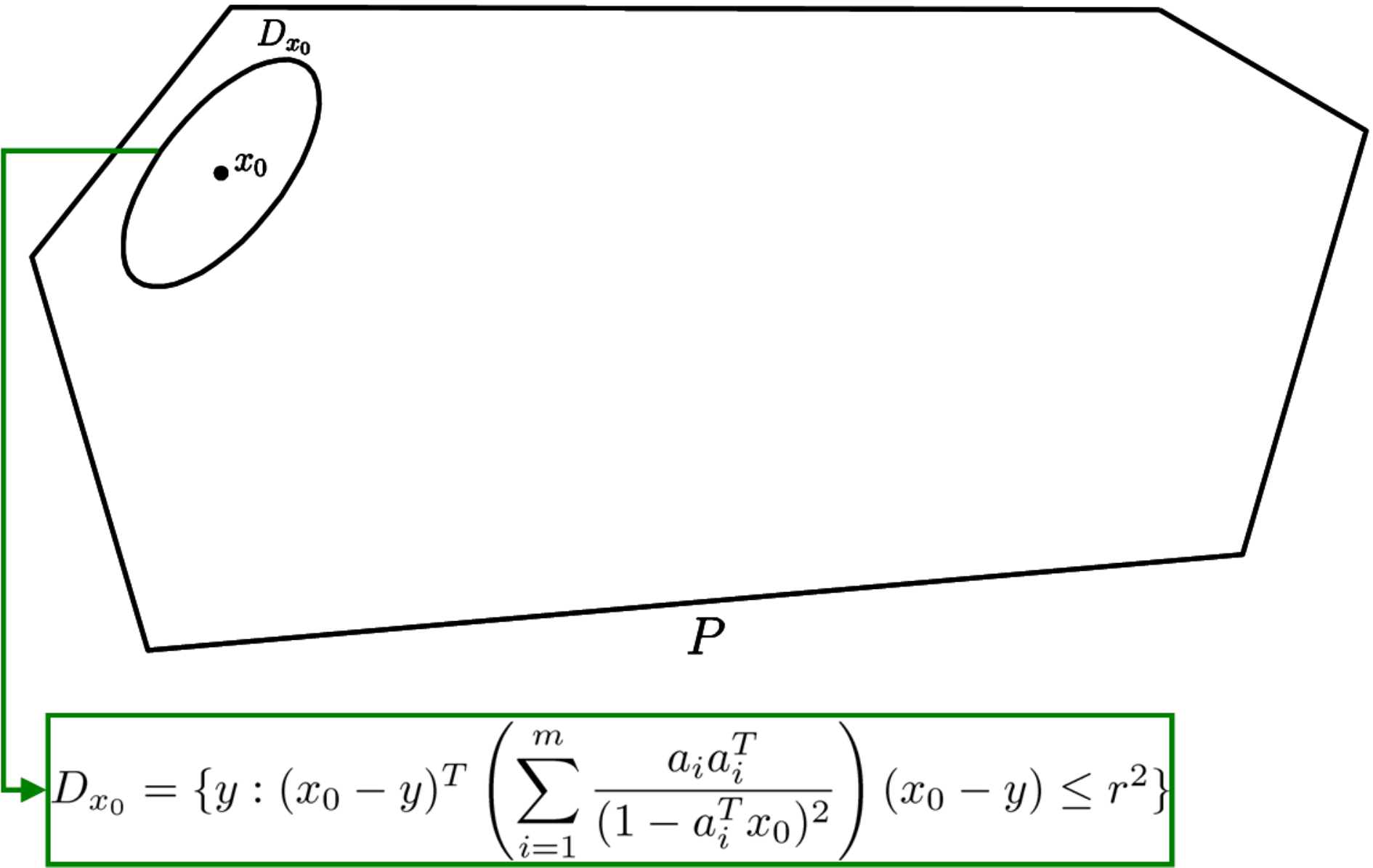}
\end{center}

%\section{Proof of Theorem~\ref{thm:10} and Theorem~\ref{thm:4}}
%\section{Algorithm Outline}
%
% \begin{defn}
%
%\ben
%\item Compute the volume $V$ of the the polytope  $Q_{\la\mu}^\nu$.
%\item Produce $m = \frac{C \log \frac{1}{\de}}{\eps^2}$ random points uniformly at random (henceforth u.a.r) from $Q_{\la\mu}^\nu$, take the nearest lattice point to each, and compute the proportion $f$ of the resulting points that lie in $P_{\la\mu}^\nu$.
%\item Output $f V $  (an estimate of $\vol \,\zeta_{\la\mu}^\nu$).
%\een
%
%\subsection{Correctness}
%The following inequality is a consequence of Theorem 1 of Hoeffding \cite{Hoeffding}.
%\begin{prop}
%Let a coin have success probability $p$. Let $\hat{m}$ be the number of successes after $m$ proper trials. Then
%$$\p\left[|\frac{\hat m}{m}- p|\geq\lambda p\right] \leq 2e^{-\frac{\la^2 m p }{3}}.$$
%\end{prop}
%
%We see that $f$ is the average of i.i.d $0-1$ random variables $x_i$, each having a success probability of $p = \frac{\vol\,\zlmn}{\vol\,\qlmn}.$ By (\ref{lem:z}) $e^{-12} \leq p \leq 1$ for each $i$.
%
%Therefore, by the above proposition,
%\beq \p\left[ \bigg|f - \frac{\vol\,\zlmn}{\vol\,\qlmn} \bigg| \geq \eps \right] \leq \de. \eeq
\begin{proof}[Proof of Theorem~\ref{thm:10}]
We denote by $W_\infty(\nu_1, \nu_2)$ the infinity-Wasserstein distance between two measures $\nu_1$ and $\nu_2$ supported on a metric space $X$. This is defined as
\beq \inf_\nu \sup\limits_{(x_1, x_2) \in supp(\nu)}\|x_1-x_2\|, \eeq where the infimum is over all measures on $X \times X$ such that the marginal on the first factor is $\nu_1$ and the marginal on the second factor is $\nu_2$.
The Dikin walk \cite{KNsample}, started at the point $x_0 \in \qlmn$ can be used to sample from a distribution $\mu$ that satisfies the following property in strongly polynomial time:
there is a measure $\mu'$ such that $W_\infty(\mu',\mu) < e^{-(\|(\la, \mu, \nu)\|_1)\left(\frac{n}{\eps}\right)^C}$ and $\|\mu' - \mu''\|_{TV} < \eps$, where $\mu''$ is the uniform measure on $\qlmn$. We note that this involves using random numbers whose bitlength depends on $\|(\la, \mu, \nu)\|$, however the number of operations on these random numbers depends polynomially on $n$, $\eps$ and $\log \frac{1}{\de}$.

This us allows produce $s$ i.i.d random points in  $T\qlmn$, each from a distribution $\mu$ that is close to $\mu''$ in the above sense.

%\newpage

\subsection{Algorithm}
\begin{algorithm}

\ben
\item Produce an estimate $\hat{V}$ of the volume $V$ of the the polytope  $Q_{\la\mu}^\nu$ in strongly polynomial time that has the following property:
    \beq \p\left[\frac{\hat{V}}{V} \in [1 - \eps, 1 + \eps]\right] > 1 - \delta.                \eeq
\item Produce $s = \frac{C \log \frac{1}{\de}}{\eps^2}$ from a distribution $\mu$ such that the following holds. There is a measure $\mu'$ such that $W_\infty(\mu',\mu) < e^{-(\|(\la, \mu, \nu)\|_1)\left(\frac{n}{\eps}\right)^C}$ and $\|\mu' - \mu''\|_{TV} < \eps$, where $\mu''$ is the uniform measure on $\qlmn$. Take the nearest lattice point to each sample, and compute the proportion $f$ of the resulting points that lie in $P_{\la\mu}^\nu$.
\item Output $f \hat{V} $  (an estimate of $\vol \,\zeta_{\la\mu}^\nu$).
\een

\end{algorithm}

%\subsection{Implementation}
\subsection{Step 1.}

There are a number of algorithms that compute estimates of the volume of a convex set $K$ in polynomial time as a function of $n$ and the ratio between a the radius $R_K$ of a ball containing $K$ and the radius $r_K$ of another ball contained in $K$. The most efficient of these is the algorithm of Lov\'{a}sz and Vempala \cite{volume4}. However, we wish to compute this estimate using a number of operations that is polynomial in $n$ rather than the bitlength, and therefore we cannot afford any dependence on $\frac{R_K}{r_K}$. To this end,  given  polytope $\qlmn$, we describe below, how to find in strongly polynomial time, a linear transformation $T$ such that for a $T\qlmn$ contains a ball of radius $1$ and is contained inside a ball of radius $m^\frac{3}{2}$, where $m$ is the number of constraints.

 Let $Q$ be a polytope given by $Ax \preccurlyeq 1$. Then, the Dikin ellipsoid $D_{x_0}(r)$ of radius $r$ centered at a point $x_0$ is the
 set of all $y$ such that \beq \{y:(x_0 - y)^T\left(\sum_{i=1}^m \frac{a_i a_i^T}{(1 - a_i^T x_0)^2}\right)(x_0 - y) \leq r^2\}. \eeq

 For every codimension $1$ facet $f$ of $\qlmn$, consider the vector $v_f$ orthogonal to the hyperplane containing $f$. Use the strongly polynomial time linear programming algorithm of Tard\"{o}s as in \cite{gct3} to maximize both $\langle x, v_f \rangle$ and $\langle x, - v_f \rangle$ over all $x$ in $\qlmn$. If both of the resulting points are contained in $f$, we declare the polytope $\qlmn$ to be contained in the affine span of $f$, and therefore have $0$ volume. Otherwise, exactly one of the points is not in $f$. Denote this point by $x_f$. Let $x_0$ be the average of all points $x_f$ as $f$ ranges over the codimension $1$ facets of $\qlmn$. Define $T$ to be a linear transformation that maps the Dikin ellipsoid $D_{x_0}(1)$ of $\qlmn$ onto the unit ball. Suppose $\qlmn$ is expressed as
 $Cx \preccurlyeq 1$.
 Such a $T$ can be found using the Cholesky decomposition of
 $$\left(\sum_{i=1}^m \frac{c_i c_i^T}{(1 - c_i^T x_0)^2}\right),$$ where $m$ is the number of rows in $C$,
 which can be found in strongly polynomial time provided one can find square-roots in one operation. We have assumed this in our model of computation.
\subsubsection{Correctness of Step 1.}

Let $K \cap (x-K)$ be defined to be the symmetrization around $x$ of the convex set $K$. Then, translating the origin to $x_0$ we have the following lemma.
\begin{lemma}
\ben\item $\frac{1}{\sqrt{m}} (\qlmn \cap (- \qlmn)) \subseteq  D_{x_0}(1) \subseteq \qlmn \cap ( - \qlmn).$
\item $ \frac{1}{m-1}\qlmn \subseteq \qlmn \cap (- \qlmn).$
\een
\end{lemma}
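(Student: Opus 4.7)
The plan is to translate $x_0$ to the origin and rescale, writing the polytope in the normalized form $\qlmn - x_0 = \{y : \tilde a_i^\tr y \leq 1, \ i=1,\dots,m\}$, where $\tilde a_i := a_i/b_i$ and $b_i := 1 - a_i^\tr x_0 > 0$. In these coordinates the Dikin ellipsoid becomes $D_{x_0}(1) = \{y : \sum_{i=1}^m (\tilde a_i^\tr y)^2 \leq 1\}$ and the symmetrization becomes $\qlmn \cap (-\qlmn) = \{y : \max_i |\tilde a_i^\tr y| \leq 1\}$. Both parts should then reduce to elementary bookkeeping on these two descriptions.

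For Part~1, I would argue as follows. The upper inclusion $D_{x_0}(1) \subseteq \qlmn \cap (-\qlmn)$ is immediate, because $\sum_i (\tilde a_i^\tr y)^2 \leq 1$ forces every individual $|\tilde a_i^\tr y| \leq 1$. For the lower inclusion, if $y \in \qlmn \cap (-\qlmn)$ then each $(\tilde a_i^\tr y)^2 \leq 1$, so $\sum_i (\tilde a_i^\tr y)^2 \leq m$, and replacing $y$ by $y/\sqrt{m}$ puts the point inside $D_{x_0}(1)$. This yields $\frac{1}{\sqrt m}(\qlmn \cap (-\qlmn)) \subseteq D_{x_0}(1)$, as required.

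Part~2 is where the particular construction of $x_0$ in the algorithm enters. I would first observe that $y/(m-1) \in \qlmn \cap (-\qlmn)$ for every $y \in \qlmn$ if and only if $\tilde a_i^\tr y \geq -(m-1)$ for all $i$ and all $y \in \qlmn$ (the opposite inequality $\tilde a_i^\tr(y/(m-1)) \leq 1$ is free). Unwinding the normalization, this is the same as showing $m\, a_i^\tr x_0 \leq \alpha_i + (m-1)$, where $\alpha_i := \min_{y \in \qlmn} a_i^\tr y$.

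This is exactly what the averaging definition $x_0 = \frac{1}{m}\sum_f x_f$ buys. For the facet $f = i$, the point $x_i$ is the extreme point of $\qlmn$ in the direction opposite to the outward normal, so $a_i^\tr x_i = \alpha_i$. For every other facet $f$, the point $x_f$ simply lies in $\qlmn$, so $a_i^\tr x_f \leq 1$. Summing these $m$ values and multiplying through by $m$ yields $m\, a_i^\tr x_0 \leq \alpha_i + (m-1)$, which is Part~2. The only place I anticipate any subtlety is matching the sign convention for $v_f$ used in the paragraph defining $x_f$ against the outward-normal convention implicit in the representation $a_i^\tr x \leq 1$ of $\qlmn$; once that sign choice is pinned down, both parts of the lemma fall out in a few lines.
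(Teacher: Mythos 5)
Your proposal is correct, and Part~1 is essentially the paper's argument stated in coordinates: the paper restricts to an arbitrary chord through $x_0$ and uses the identity $1/t^2 = \sum_i 1/t_i^2$, which, after normalizing $\tilde a_i := a_i/(1 - a_i^\tr x_0)$, is exactly your observation that $\sum_i (\tilde a_i^\tr y)^2$ controls (and is controlled by, up to a factor $m$) $\max_i (\tilde a_i^\tr y)^2$. For Part~2 you take a genuinely more streamlined route. The paper again works chord-by-chord: it takes the chord $ab$ through $x_0$, lets $f$ be the facet through the near endpoint $a$, introduces the auxiliary point $a'$ and the average $x_{2,\mathrm{av}} = \tfrac{1}{m-1}\sum_{f' \neq f} x_{f'}$, and chases through a similarity argument to bound $|b|/|a|$ by $m-1$. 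You instead reduce the claimed inclusion to the $m$ explicit facet inequalities $m\,a_i^\tr x_0 \leq \alpha_i + (m-1)$ and verify each one directly from the defining identity $x_0 = \tfrac{1}{m}\sum_f x_f$, using only $a_i^\tr x_i = \alpha_i$ (the minimizing point for facet $i$) and $a_i^\tr x_f \leq 1$ for $f \neq i$. The two proofs rest on the same two facts about the construction of $x_0$, but your version avoids the chord reduction, the auxiliary points $a'$ and $x_{2,\mathrm{av}}$, and the similarity of triangles; it is shorter and, I think, easier to check. One small caveat worth making explicit in a final write-up is the sign convention for $v_f$: you need $x_f$ to be the \emph{minimizer} of $a_i^\tr x$ (the point ``farthest from'' the hyperplane of facet $i$), which is what the algorithm's ``denote by $x_f$ the one not in $f$'' delivers once the outward normal convention $a_i^\tr x \leq 1$ is fixed; you flag this yourself, and it does check out.
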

\begin{proof}
Consider an arbitrary chord of $ab$ of $\qlmn$ through the origin $x_0$. Identify it with the real line. Let $\pm t$ be the points where the chord intersects the ellipsoid. Let $\pm t_1, \pm t_2, \dots$ be the intersections of the chord with the extended facets of the symmetrized body. Then, $$\frac{1}{t^2} = \sum_i \frac{1}{t_i^2}.$$ It follows that
\beq \frac{1}{\sqrt m} \min_i |t_i| \leq |t| \leq \min_i |t_i|. \eeq This completes the proof of the first part of the lemma. To see the second part, consider again the same arbitrary chord $ab$ through $x_0$. Suppose without loss of generality that $|t_1| \leq |t_2| \leq \dots$, and that $a = - t_1$ and $b = t_k$. Let $a$ lie on the face $f$. Let $a'$ be the intersection with the hyperplane containing face $f_1 := f$ of the line through $x_0$ and $x_f$. Then, by the definition of $x_f$, \beq \frac{|x_f|}{|a'|} \geq \frac{|b|}{|a|}.\eeq
It thus suffices to show that $ m -1 \geq \frac{|x_f|}{|a'|}$. As $f_i$ ranges over the faces of $\qlmn$,  all the points $x_{f_i}$ lie on the same side of the affine span of $f_1$. Therefore their average $x_{2, av}:= \left(\frac{1}{m-1}\right)\sum_{i \geq 2} x_{f_i}$ lies on the same side of the affine span of
$f_1$ as does $x_{f_1}$. However $x_{2, av}$ lies on the line joining $x_{f_1}$ and $x_0 = \left(1/{m}\right)\sum_{i \geq 1} x_{f_i}.$ Therefore \beq m-1 = \frac{|x_f|}{|x_{2, av}|}\geq \frac{|x_f|}{|a'|} \geq \frac{|b|}{|a|}.\eeq This proves the lemma.
\end{proof}

\subsection{Step 2.}
%Given interior points $x,y$ in  $int(K)$, suppose $p,q$ are the
%ends of the chord in $K$ containing $x,y$ and $p,x,y,q$ lie in that
%order.
%\begin{defn}Denote by $d_H$ the Hilbert (projective) metric defined by $$d_H(x, y) := \ln\left(1 + \frac{|x-y||p-q|}{|p-x||q-y|}\right).$$   %
%For $x \in K$ and a vector $v$, $|v|_x$ is defined to be $\sup_\alpha \{ x \pm \alpha v \in K\}$.
%\end{defn}
%Let $d_H(\cdot, \cdot)$ denote the Hilbert metric in $\qlmn$.

We observe that $f$ is the average of i.i.d $0-1$ random variables $x_i$, each having a success probability $p$ that satisfies  $p - \eps <   \frac{\vol\,\zlmn}{\vol\,\qlmn} < p + \eps.$ By (\ref{lem:z}) $\frac{1}{C} \leq p \leq 1$ for each $i$.

The following inequality is a consequence of Theorem 1 of Hoeffding \cite{Hoeffding}.
\begin{prop}
Let a coin have success probability $p$. Let $\hat{m}$ be the number of successes after $m$ proper trials. Then
$$\p\left[|\frac{\hat m}{m}- p|\geq\lambda p\right] \leq 2e^{-\frac{\la^2 m p }{3}}.$$
\end{prop}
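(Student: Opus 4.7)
The plan is to prove this inequality as the standard multiplicative Chernoff bound, obtained through the exponential-moment method. The only step that requires any analytic care is the final inequality that converts the Legendre transform of the Bernoulli log-MGF into the clean quadratic $\lambda^2 m p / 3$ in the exponent; everything else is mechanical.

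First I would write $\hat m = \sum_{i=1}^m X_i$ with the $X_i$ i.i.d.\ Bernoulli random variables of mean $p$. For any $t > 0$, Markov's inequality applied to $e^{t \hat m}$ together with independence gives
\beq
\p[\hat m \geq (1+\lambda) m p] \leq e^{-t(1+\lambda) m p}\,\E[e^{t \hat m}] = e^{-t(1+\lambda) m p}\,(1 - p + p e^t)^m.
\eeq
Using $1 - p + p e^t \leq \exp\!\bigl(p(e^t - 1)\bigr)$ and then optimizing by setting $t = \log(1+\lambda)$ yields
\beq
\p[\hat m \geq (1+\lambda) m p] \leq \left(\frac{e^\lambda}{(1+\lambda)^{1+\lambda}}\right)^{m p}.
\eeq
The lower tail $\p[\hat m \leq (1-\lambda) m p]$ is handled symmetrically with $t < 0$, producing the analogous expression with $\lambda$ replaced by $-\lambda$.

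The remaining step, which I expect to be the main (if mild) obstacle, is the elementary inequality $(1+\lambda)\log(1+\lambda) - \lambda \geq \lambda^2 / 3$ for $0 \leq \lambda \leq 1$, together with its partner $(1-\lambda)\log(1-\lambda) + \lambda \geq \lambda^2 / 3$ on the same range. Both follow by a direct Taylor comparison: differentiating twice shows that the excess is nonnegative on $[0,1]$. Once these inequalities are in hand, each one-sided tail is bounded by $e^{-\lambda^2 m p / 3}$, and summing the two tail estimates gives the factor of $2$ and the claimed two-sided bound. As a sanity check, this is precisely Theorem~1 of Hoeffding~\cite{Hoeffding} specialized to Bernoulli$(p)$ trials in the multiplicative-error regime $\lambda \in [0,1]$, which is the route the authors effectively take by direct citation.
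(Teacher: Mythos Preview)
Your derivation is correct and complete: the exponential-moment/Chernoff argument with the optimized $t=\log(1\pm\lambda)$ yields the two one-sided bounds, and the calculus checks $(1+\lambda)\log(1+\lambda)-\lambda\ge \lambda^2/3$ and $(1-\lambda)\log(1-\lambda)+\lambda\ge \lambda^2/3$ on $[0,1]$ are valid (the second derivatives have the right sign, and the boundary values close the argument). The paper itself offers no proof at all---it simply states the proposition as ``a consequence of Theorem~1 of Hoeffding~\cite{Hoeffding}'' and moves on---so your write-up is strictly more than what the paper does, while being exactly the content behind that citation (Hoeffding's Theorem~1 is the KL/Chernoff-type bound for $[0,1]$-valued summands, of which your Bernoulli computation is the relevant special case). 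In short: same route, but you actually walk it.
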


By the above proposition,
\beq \p\left[ \bigg|f - \frac{\vol\,\zlmn}{\vol\,\qlmn} \bigg| \geq 2 \eps \right] \leq \de. \eeq

\subsubsection{Correctness of Step 2.}

Provided we work with numbers whose bitlength is $$(\|(\la, \mu, \nu)\|_1)\left(\frac{n}{\eps}\right)^C,$$ we can produce $\mu$ such that there exists $\mu'$ satisfying the following. \beq\lab{cond:1}W_\infty(\mu',\mu) < e^{-(\|(\la, \mu, \nu)\|_1)\left(\frac{n}{\eps}\right)^C}\eeq and \beq\lab{cond:2}\|\mu' - \mu''\|_{TV} < \eps,\eeq where $\mu''$ is the uniform measure on $\qlmn$. In a real number model of computation,  Dikin walk produces a sample from $\mu'$ in polynomial time. By truncating these real numbers at every step to a bit-length of $(\|(\la, \mu, \nu)\|_1)\left(\frac{n}{\eps}\right)^C,$ the errors do not accumulate to beyond a multiplicative factor of $O(n^C)$ and the resulting measure $\mu$ satisfies the above conditions (\ref{cond:1}) and (\ref{cond:2}).

This completes the proof of Theorem~\ref{thm:10}.
\end{proof}
\begin{proof}[Proof of Theorem~\ref{thm:4}]
This follows from Theorem~\ref{thm:10} and Lemma~\ref{lem:11}.
\end{proof}

\section{Approximate Log-Concavity}

In \cite{Okounkov}, Okounkov raised the question of whether the Littlewood-Richardson coefficients $\clmn$ are a log-concave function of $\la, \mu, \nu$. Chindris, Derkson and Weyman showed in \cite{chindris} that this is false in general. In this section, we show that certain Littlewood-Richardson coefficients satisfy  a form of approximate log-concavity by proving Theorem~\ref{thm:ok}.

\begin{theorem}[Brunn-Minkowski]
Let $Q_1, Q_2$ and $Q_3$ be convex subsets of $\R^n$ and $\theta \in (0, 1)$, such that $\theta Q_1 + (1 - \theta) Q_3 \subseteq Q_2.$ where $+$ denotes Minkowski addition.
Then, $$\left(\vol Q_2\right)^{\frac{1}{n}} \geq \theta \left(\vol Q_1\right)^{\frac{1}{n}} +  (1 - \theta) \left(\vol Q_3\right)^{\frac{1}{n}}.$$
\end{theorem}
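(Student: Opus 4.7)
The plan is to reduce the stated containment version to the standard Brunn--Minkowski inequality
\begin{equation*}
\vol(A+B)^{1/n} \;\geq\; \vol(A)^{1/n} + \vol(B)^{1/n},
\end{equation*}
valid for arbitrary non-empty compact sets $A, B \subseteq \R^n$. Once this classical form is in hand, the result follows quickly: apply it to $A = \theta Q_1$ and $B = (1-\theta) Q_3$, use the homogeneity $\vol(c Q) = c^n \vol(Q)$ to factor out $\theta$ and $1-\theta$, and use monotonicity of volume together with the hypothesis $\theta Q_1 + (1-\theta) Q_3 \subseteq Q_2$ to replace the left-hand side by $\vol(Q_2)^{1/n}$.

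For the classical inequality itself, my plan is the standard induction-on-structure argument via boxes. First, verify the case where $A$ and $B$ are axis-aligned boxes $A = \prod_{i=1}^n [0,a_i]$ and $B = \prod_{i=1}^n [0,b_i]$. Then $A+B = \prod_{i=1}^n [0, a_i+b_i]$, and the desired inequality, after dividing through by $\prod(a_i+b_i)^{1/n}$, reduces to
\begin{equation*}
\Bigl(\prod_{i=1}^n \tfrac{a_i}{a_i+b_i}\Bigr)^{1/n} + \Bigl(\prod_{i=1}^n \tfrac{b_i}{a_i+b_i}\Bigr)^{1/n} \;\leq\; 1,
\end{equation*}
which is immediate from the AM--GM inequality applied to each product separately and then summing.

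The heart of the argument is the passage from single boxes to finite unions of almost-disjoint boxes, by induction on the total number of boxes $m$ appearing in $A$ and $B$. The key move is to choose an axis-aligned hyperplane $H$ that separates at least two of the boxes constituting $A$, splitting $A$ into $A^+$ and $A^-$ each with strictly fewer boxes than $A$. One then translates $B$ parallel to the splitting direction so that the same hyperplane (or a parallel one) splits $B$ into $B^{\pm}$ with the volume ratio $\vol(B^{\pm})/\vol(B) = \vol(A^{\pm})/\vol(A)$; translation does not affect volumes or the Minkowski sum up to translation. Since $A^+ + B^+$ and $A^- + B^-$ lie on opposite sides of $H$, their volumes add, and applying the induction hypothesis to each pair, followed by the elementary inequality $(x+y)^{1/n}\!\!$-type bookkeeping, reproduces the desired bound for $A$ and $B$.

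The main obstacle will be the hyperplane-cut step, both in guaranteeing that the translation can be made to equalize the volume fractions on the two sides, and in verifying cleanly that the inductive inequality produced by adding the two sub-volumes recombines into the required form. Once the finite-union-of-boxes case is established, general compact convex $Q_1, Q_3$ (which is all we need here) follow by interior approximation by such unions together with outer regularity of Lebesgue measure, both sides of the inequality being continuous in this limit.
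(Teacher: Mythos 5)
Your proposal is correct in substance, but note that the paper does not prove this statement at all: it is invoked as the classical Brunn--Minkowski theorem and used as a black box in the proof of Theorem~\ref{thm:ok}, so there is no in-paper argument to match. What you supply is the standard route: the reduction of the containment form to the Minkowski-sum form via $\vol(Q_2)\geq\vol(\theta Q_1+(1-\theta)Q_3)$ together with the homogeneity $\vol(cQ)=c^n\vol(Q)$ is exactly right, and your proof of the sum form is the classical Hadwiger--Ohmann induction (boxes via AM--GM, then finite unions of boxes by cutting $A$ with a coordinate hyperplane, translating $B$ so that the volume fractions on the two sides agree --- guaranteed by the intermediate value theorem since the fraction varies continuously and monotonically with the translation --- and recombining, since with equal ratios the induction hypothesis on each side yields $\vol(A^{\pm}+B^{\pm})\geq\vol(A^{\pm})\bigl(1+(\vol(B)/\vol(A))^{1/n}\bigr)^{n}$, and summing gives $\bigl(\vol(A)^{1/n}+\vol(B)^{1/n}\bigr)^{n}$; the box count strictly drops on each side, so the induction closes), followed by approximation of compact convex sets by unions of boxes. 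The steps you flag as the main obstacles are precisely the ones handled by this standard bookkeeping, so the plan goes through. Two small caveats worth recording if you write it up: the statement implicitly requires $Q_1$ and $Q_3$ to be nonempty (as written in the paper it fails if, say, $Q_1=\emptyset$), and for possibly unbounded convex sets one should either restrict to bounded sets (all that is needed here, since the $Q$'s are hive polytopes) or exhaust by compact pieces; also handle the degenerate case $\vol(A)=0$ or $\vol(B)=0$ separately before dividing by volumes in the recombination step.
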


Recall that (\ref{eq:qlmn}), $\qlmn$ is described by the system of inequalities
\beqs  Ax - \blmn \preccurlyeq \vec{2}.\eeqs

If $\theta (\la, \mu, \nu) + (1 - \theta) (\la', \mu', \nu') = (\bar \la, \bar \mu, \bar \nu)$, and each vector indexes a Littlewood-Richardson coefficient, then, because $\qlmn, Q_{\la'\mu'}^{\nu'}$ and $Q_{\bar{\la}\bar{\mu}}^{\bar{\nu}}$ are described by  rhombus inequalities, we have
\beq \theta \qlmn + (1 - \theta)Q_{\la'\mu'}^{\nu'} \subseteq Q_{\bar{\la}\bar{\mu}}^{\bar{\nu}}.\eeq
Therefore, by the Brunn-Minkowski inequality,
$$\left(\vol \qlmn\right)^{\frac{1}{n}} \geq \theta \left(\vol Q_{\la'\mu'}^{\nu'} \right)^{\frac{1}{n}} +  (1 - \theta) \left(\vol Q_{\bar{\la}\bar{\mu}}^{\bar{\nu}} \right)^{\frac{1}{n}}.$$ Hence, by the concavity and monotonicity of the logarithm,
$$\log\left(\vol \qlmn\right) \geq \theta \log\left(\vol Q_{\la'\mu'}^{\nu'} \right) +  (1 - \theta) \log\left(\vol Q_{\bar{\la}\bar{\mu}}^{\bar{\nu}} \right).
$$
By Lemma~\ref{lem:z},
if $n > C$, and $n^2 \eps^{-1} \in \N$ then if $(\lambda, \mu, \nu) \in (1/\eps)(\De, \De, \De') +  LRC$
 \beqs  1 - {C}{\eps} \leq \frac{\clmn}{\vol\, \qlmn} \leq  1,\eeqs
and corresponding statements hold for $(\la', \mu', \nu')$ and $(\bar \la, \bar \mu, \bar \nu)$.

This proves Theorem~\ref{thm:ok}.

\section{Concluding Remarks}

In this paper, we developed a strongly polynomial randomized approximation scheme for Littlewood-Richardson coefficients that belong to a translate of the Littlewood-Richardson cone by the vector $(\De, \De, \De')$. It would be of interest to extend these results to the entire Littlewood-Richardson cone.

\section{Acknowledgements}
I thank Ketan Mulmuley for suggesting the question of computing Littlewood-Richardson coefficients, and Ravi Kannan for bringing \cite{KannanVempala} to my notice.

\end{document}